\newtheorem{theorem}{Theorem}[section]
\newtheorem{definition}[theorem]{Definition}
\newtheorem{lemma}[theorem]{Lemma}
\newtheorem{proposition}[theorem]{Proposition}
\newtheorem{corollary}[theorem]{Corollary}
\tikzset{node distance=50pt, auto}
\title{A Prime Decomposition of Probabilistic Automata}
\author{
	Gunnar Carlsson \\
	Department of Mathematics \\
	Stanford University
	\and
	Jun Yu \\
	Institute for Computational \& Mathematical Engineering \\
	Stanford University
}
\date{}
\begin{document}

\maketitle

\tableofcontents

\let\thefootnote\relax\footnote{This paper is based on the second author's doctoral thesis written under the direction of the first author.}

\section{Introduction}

Krohn-Rhodes theorem asserts that every deterministic automaton can be decomposed into cascades of irreducible automata. Algebraically, this implies that a finite semigroup acting on a finite set factors into a finite wreath product of finite simple groups and a semigroup of order $3$ consisting of the identity map and constant maps on a set of order $2$. The semigroups in this factorization are prime under the semidirect product.

In Section \ref{sec:2}, we formulate a definition of probabilistic automata in which a statement analogous to the prime decomposition follows directly from Krohn-Rhodes theorem.

Section \ref{sec:3} deals with Green-Rees theory. We determine Green's relations on the monoid of stochastic matrices in order to characterize the local structure of probabilistic automata.

Krohn-Rhodes theory is introduced in Section \ref{sec:4}. The prime decomposition is presented as a framework to study the global structure of probabilistic automata.

Section \ref{sec:5} discusses Munn-Ponizovski\u{\i} theory. We prove that irreducible representations of a probabilistic automaton are determined by those of finite groups in its holonomy decomposition, which is a variant of the prime decomposition.

\section{Automata and Semigroups} \label{sec:2}

\subsection{Deterministic Automata}

Given a set $X$, $F_X$ denotes the monoid of all maps $X \to X$. If $X$ is of order $n$, we can index $X$ by
	\begin{equation*}
		\mathbf{n} = \{i \mid 0 \leq i < n\}
	\end{equation*}
with a bijection $X \to \mathbf{n}$, and write $F_n \cong F_X$.

\begin{definition}
A deterministic automaton is a triple $(X, \Sigma, \delta)$ consisting of finite sets $X$ and $\Sigma$ along with a map $\delta: X \times \Sigma \to X$. We call $X$ a state set, $\Sigma$ an alphabet, and $\delta$ a transition function.
\end{definition}

Let $\Sigma^*$ be the free monoid on $\Sigma$. We can define a right action of $\Sigma^*$ on $X$ by $xa = \delta(x, a)$, where $x \in X$ and $a \in A$. This action may not be faithful, and hence we consider the canonical homomorphism $\sigma: \Sigma^* \to F_X$. If $\Sigma^+$ is the free semigroup on $A$, then
	\begin{equation*}
		S = \Sigma^+\sigma
	\end{equation*}
acts faithfully on $X$. Since $F_X$ is finite, so is $S$.

\begin{definition}
A transformation semigroup is a pair $(X, S)$ in which a finite semigroup $S$ acts faithfully on $X$ from the right.
\end{definition}

In case $S$ is a monoid such that $1_S = 1_X$, we refer to $(X, S)$ as a \emph{transformation monoid}. If, in addition, $S$ is a group, $(X, S)$ is called a \emph{transformation group}.

If $S$ is not a monoid, we can adjoin an identity element $1$ in a natural way to form a monoid $S^1$. It is understood that $S^1 = S$ when $S$ is a monoid. Similarly, in its absence, adjuction of a zero element $0$ defines a new semigroup $S^0$. We write $\mathbf{FSgp}$ for the category of finite semigroups.

\subsection{Probabilistic Automata}

Let $X$ by a finite set. Then $\mathbb{P}X$ is the set of all probability distributions on $X$. An element $\mu \in \mathbb{P}X$ is written as a formal sum
	\begin{equation*}
		\mu = \sum_{x \in X} \mu(x)x.
	\end{equation*}
We can regard $\mathbb{P}X$ as a subset of the free $\mathbb{R}$-module on $X$, although $\mathbb{P}X$ itself does not have an additive structure.

\begin{definition}
A probabilistic automaton is a quadruple $(X, \Sigma, \delta, \mathbb{P})$ consisting of finite sets $X$ and $\Sigma$ along with a map $\delta: X \times \Sigma \to X$ and its extension $\mathbb{P}\delta: \mathbb{P}X \times \mathbb{P}\Sigma \to \mathbb{P}X$ defined by
	\begin{equation*}
		\mathbb{P}\delta(\pi, \mu) = \sum_{(x, a) \in X \times \Sigma}\pi(x)\mu(a)\delta(x, a)
	\end{equation*}
for $\pi \in \mathbb{P}X$ and $\mu \in \mathbb{P}\Sigma$.
\end{definition}

For a subset $\Omega$ of $\mathbb{P}\Sigma$, the quintuple $(X, \Sigma, \delta, \mathbb{P}, \Omega)$ is an \emph{instance} of $(X, \Sigma, \delta, \mathbb{P})$, in which case $\mathbb{P}\delta$ is restricted to $\mathbb{P}X \times \Omega'$, where $\Omega'$ denotes the closure of the set generated by $\Omega$. When $\Omega$ is finite, $(X, \Sigma, \delta, \mathbb{P}, \Omega)$ resembles the classical definition of a probabilistic automaton \cite{R1}.

Again, set $S = \Sigma^+\sigma$, where $\sigma: \Sigma^* \to F_X$ is the canonical homomorphism. Given $\mu \in \mathbb{P}A$, we abuse notation by writing $\mu$ for its corresponding distribution in $\mathbb{P}S$, so that for any $s \in S$,
	\begin{equation*}
		\mu(s) = \sum_{a\sigma = s} \mu(a).
	\end{equation*}
Then $\mathbb{P}S$ is closed under convolution, which is given by
	\begin{equation*}
		(\mu \ast \nu)(s) = \sum_{s = tu} \mu(t)\nu(u)
	\end{equation*}
for $\mu, \nu \in \mathbb{P}S$, and hence $\mathbb{P}S$ forms a semigroup under convolution. Since $S$ is finite, as a topological semigroup, $\mathbb{P}S$ is compact Hausdorff.

\begin{definition}
A transition semigroup is a triple $(X, S, \mathbb{P})$ in which $S$ is a finite semigroup acting faithfully on a finite set $X$ from the right, inducing a right action of $\mathbb{P}S$ on $\mathbb{P}X$ defined by
		\begin{equation*}
			\pi\mu = \sum_{xs = y} \pi(x)\mu(s)y
		\end{equation*}
for $\pi \in \mathbb{P}X$ and $\mu \in \mathbb{P}S$.
\end{definition}

For $Q \subset \mathbb{P}S$, the quadruple $(X, S, \mathbb{P}, Q)$ is an \emph{instance} of $(X, S, \mathbb{P})$, in which case the action of $\mathbb{P}S$ on $\mathbb{P}X$ is restricted to $Q'$, where $Q'$ denotes the closure of the set generated by $Q$.

It is easy to see that $\pi\mu \in \mathbb{P}X$. Although we require that $S$ acts faithfully on $X$, the same is not true of the action of $\mathbb{P}S$ on $\mathbb{P}X$. We refer to $(X, S, \mathbb{P})$ as a \emph{transition monoid} if $(X, S)$ is a transformation monoid. A \emph{transition group} is defined accordingly.

\section{Local Structure of Probabilistic Automata} \label{sec:3}

\subsection{Green-Rees Theory}

We introduce the work of Green and Rees as presented by Clifford \& Preston \cite{CP} and Rhodes \& Steinberg \cite{RS}.

A subset $I \neq \emptyset$ of a semigroup $S$ is a \emph{left ideal} if $SI \subset I$. A \emph{right ideal} is defined dually. We say $I$ is an \emph{ideal} if it is both a left and right ideal. Moreover, $S$ is \emph{left simple}, \emph{right simple}, or \emph{simple} if it does not contain a proper left ideal, right ideal, or ideal. For any $s \in S$, we refer to $L(s) = S^1s$, $R(s) = sS^1$, and $J(s) = S^1sS^1$, respectively, as the \emph{principal left ideal}, \emph{principal right ideal}, and \emph{principal ideal} generated by $s$.

\begin{definition}
Let $S$ be a semigroup. Then the quasiorders on $S$ given by
\begin{compactenum}
	\item $s \leq_\mathfrak{l} t$ if and only if $L(s) \subset L(t)$,
	\item $s \leq_\mathfrak{r} t$ if and only if $R(s) \subset R(t)$,
	\item $s \leq_\mathfrak{j} t$ if and only if $J(s) \subset J(t)$,
	\item $s \leq_\mathfrak{h} t$ if and only if $s \leq_\mathfrak{l} t$ and $s \leq_\mathfrak{r} t$
\end{compactenum}
induce equivalence relations $\sim_\mathfrak{l}$, $\sim_\mathfrak{r}$, $\sim_\mathfrak{h}$, and $\sim_\mathfrak{j}$, respectively, on $S$. Furthermore, the relation
	\begin{equation*}
		\mathfrak{d} = \mathfrak{l} \circ \mathfrak{r} = \mathfrak{r} \circ \mathfrak{l}
	\end{equation*}
in $S \times S$ defines an equivalence relation $\sim_\mathfrak{d}$ on $S$. These five equivalence relations on $S$ are known as Green's relations.
\end{definition}

Green's relations coincide in a commutative semigroup, while each relation is trivial for a group. In $S \times S$,
	\begin{equation*}
		\mathfrak{h} = \mathfrak{l} \cap \mathfrak{r} \subset \mathfrak{l} \cup \mathfrak{r} \subset \mathfrak{d} \subset \mathfrak{j}.
	\end{equation*}
Moreover, $\sim_\mathfrak{l}$ is a right congruence and $\sim_\mathfrak{r}$ is a left congruence. We write the $\mathfrak{l}$-class of $s \in S$ as
	\begin{equation*}
		L_s = \{t \in S \mid s \sim_\mathfrak{l} t \},
	\end{equation*}
and define $R_s$, $J_s$, $H_s$, and $D_s$ analogously.

\begin{proposition} \label{prop:Identities}
If $e$ is an idempotent in a semigroup $S$, then
	\begin{inparaenum}
		\item $Se \cap J_e = L_e$,
		\item $eS \cap J_e = R_e$, and
		\item $eSe \cap J_e = H_e$.
	\end{inparaenum}
\end{proposition}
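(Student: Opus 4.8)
The plan is to deduce everything from the first identity, so I would begin by recording what idempotency buys us. Since $e = ee$, we have $e \in Se$ and $e \in eS$, whence $S^1 e = Se$ and $eS^1 = eS$; in particular $L(e) = Se$ and $R(e) = eS$. A one-line computation gives $eSe = Se \cap eS$: the inclusion $\subseteq$ is clear, and if $x = ae = eb$ lies in $Se \cap eS$ then $ex = x$ and $xe = x$, so $x = exe \in eSe$. Recalling that $H_e = L_e \cap R_e$ (as $\mathfrak{h} = \mathfrak{l} \cap \mathfrak{r}$), identity (3) then follows formally from (1) and (2):
\[
eSe \cap J_e = (Se \cap eS) \cap J_e = (Se \cap J_e) \cap (eS \cap J_e) = L_e \cap R_e = H_e.
\]
Moreover (2) is just (1) read in the opposite semigroup $S^{\mathrm{op}}$, where $\mathfrak{l}$ and $\mathfrak{r}$ are interchanged, $Se$ becomes $eS$, and $J_e$ is unchanged. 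Thus the entire content lies in (1).

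For (1), the inclusion $L_e \subseteq Se \cap J_e$ is immediate: if $t \sim_\mathfrak{l} e$ then $S^1 t = S^1 e = Se$, so $t \in Se$, and since $\sim_\mathfrak{l} \subseteq \sim_\mathfrak{j}$ we also get $t \in J_e$. For the reverse inclusion, take $t \in Se \cap J_e$ and write $t = se$. Then $S^1 t \subseteq S^1 e$, i.e.\ $t \leq_\mathfrak{l} e$, while membership in $J_e$ gives $t \sim_\mathfrak{j} e$. What remains is to upgrade $t \leq_\mathfrak{l} e$ to $t \sim_\mathfrak{l} e$, that is, to produce $w \in S^1$ with $wt = e$. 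This single step is the crux, and I expect it to be the main obstacle: it is precisely here that finiteness of $S$ must be used, since in a general semigroup $\mathfrak{d} \subsetneq \mathfrak{j}$ and no such passage from the $\mathfrak{j}$-order back to the $\mathfrak{l}$-order need exist.

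To close the gap I would establish the required instance of stability by hand. From $t \sim_\mathfrak{j} e$ write $e = x\,t\,y$ with $x, y \in S^1$; substituting $t = se$ and regrouping gives $e = (xs)\,e\,y$, so with $d = xs$ we have $e = d\,e\,y$ and hence $e = d^{\,n} e\, y^{\,n}$ for all $n \geq 1$. Because $S$ is finite, some power $g = d^{\,n}$ is idempotent, and then $e = g\,e\,y^{\,n}$ forces $ge = g(g\,e\,y^{\,n}) = g\,e\,y^{\,n} = e$. Since $g = (xs)^{\,n} \in S^1 s$, this yields $e = ge \in (S^1 s)e = S^1 t$, i.e.\ $e \leq_\mathfrak{l} t$; combined with $t \leq_\mathfrak{l} e$ this gives $t \sim_\mathfrak{l} e$ and completes (1). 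Applying the identical argument in $S^{\mathrm{op}}$ discharges (2), and (3) was already reduced to these. The only delicate bookkeeping is verifying that the idempotent power $g$ factors as $g \in S^1 s$, so that the recovered element lands on the correct side in $S^1 t$ (and its mirror image in the opposite semigroup).
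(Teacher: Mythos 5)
Your proof is correct, but there is no in-paper argument to compare it against: the paper states Proposition \ref{prop:Identities} without proof, as part of its Green--Rees background (implicitly citing Clifford \& Preston and Rhodes \& Steinberg). The standard route in those references derives the three identities from \emph{stability}, a notion the paper only introduces afterwards via Koch \& Wallace: for $t \in Se \cap J_e$ one has $t \leq_\mathfrak{l} e$ and $t \sim_\mathfrak{j} e$, and stability upgrades this to $t \sim_\mathfrak{l} e$. Your idempotent-power argument (from $e = dey$ get $e = d^n e y^n$, choose $n$ with $g = d^n$ idempotent, conclude $ge = e$ and $g \in S^1 s$) is exactly a self-contained proof of the needed instance of stability for finite semigroups, and your reductions --- item (2) from item (1) via $S^{\mathrm{op}}$, and item (3) from (1) and (2) via the identities $eSe = Se \cap eS$ and $H_e = L_e \cap R_e$ --- are clean and correct. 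Two remarks. First, your instinct that finiteness is indispensable is right: the statement is literally false for arbitrary semigroups; in the bicyclic monoid $B = \langle p, q \mid pq = 1 \rangle$ with $e = 1$ one has $Be \cap J_e = B$ (since $B$ is simple) while $L_e = \{q^i \mid i \geq 0\}$. So your proof in effect repairs a small imprecision in the statement, whose hypothesis should read ``finite'' (or ``stable'') semigroup --- the setting in which the paper actually works. Second, the paper also applies Green-relation machinery to compact semigroups such as $\mathbb{P}S$; there your idempotent-power step does not apply verbatim (no finite power of $d$ need be idempotent), and one would instead take an idempotent in the closed subsemigroup generated by $d$, or invoke stability of compact semigroups directly. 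Since the paper only ever invokes this proposition for finite semigroup algebras in Section \ref{sec:5}, your argument covers every use actually made of it.
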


For any $u \in S$, the \emph{left translation} by $u$ is the map $\lambda_u: S \to S$ defined by $s\lambda_u = us$. Its dual, denoted $\rho_u$, is the \emph{right translation} by $u$. Green \cite{G1} used translations to construct bijections $L_s \to L_t$ and $R_s \to R_t$ when $s \sim_\mathfrak{d} t$.

\begin{lemma}[Green] \label{lem:Green} Suppose $s, t \in S$, where $S$ is a semigroup.
	\begin{compactenum}
		\item If $us = t$ and $vt = s$ for $u, v \in S^1$, so that $s \sim_\mathfrak{l} t$, then the maps $\lambda_u|_{R_s}$ and $\lambda_v|_{R_t}$ are inverses of one another.
		\item If $su = t$ and $tv = s$ for $u, v \in S^1$, so that $s \sim_\mathfrak{r} t$, then the maps $\rho_u|_{L_s}$ and $\rho_v|_{L_t}$ are inverses of one another.
	\end{compactenum}
\end{lemma}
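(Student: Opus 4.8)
The plan is to prove part (1) directly and then obtain part (2) by the left–right dual symmetry built into the definitions. For (1) I would proceed in two stages: first check that $\lambda_u$ and $\lambda_v$ restrict to maps $R_s \to R_t$ and $R_t \to R_s$ respectively, and then check that these two restrictions compose to the identities on $R_s$ and $R_t$. Mutual invertibility, and hence bijectivity, follows formally from the two composite identities.

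For the first stage, note that the hypotheses $us = t$ and $vt = s$ with $u,v \in S^1$ give $t \in S^1 s$ and $s \in S^1 t$, so $L(s) = L(t)$ and $s \sim_\mathfrak{l} t$ as asserted. The substantive input is the fact, recorded just before the lemma, that $\sim_\mathfrak{r}$ is a left congruence: for any $x \in R_s$ we have $x \sim_\mathfrak{r} s$, and multiplying on the left by $u$ yields $ux \sim_\mathfrak{r} us = t$, so $x\lambda_u = ux \in R_t$. The symmetric argument with $vt = s$ shows $\lambda_v$ sends $R_t$ into $R_s$. For the second stage I would use that $R_s \subseteq R(s) = sS^1$, so every $x \in R_s$ can be written $x = sp$ with $p \in S^1$. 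Applying $\lambda_u$ and then $\lambda_v$ returns $v(ux) = (vus)p$, and since $vus = v(us) = vt = s$ this equals $sp = x$; thus this composite is the identity on $R_s$. The dual computation, writing $y = tq$ with $q \in S^1$ and using $uvt = u(vt) = us = t$, shows that applying $\lambda_v$ and then $\lambda_u$ fixes every $y \in R_t$.

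The one step that is not purely formal is the identity $vux = x$ for \emph{all} $x \in R_s$, not merely at $x = s$. The left-congruence property alone guarantees that the images land in the correct $\mathfrak{r}$-classes but says nothing about being inverse; one genuinely needs the containment $R_s \subseteq sS^1$ to represent $x$ as $sp$ and collapse the product $vu$ against $s$ via $vus = s$. I expect this to be the crux of the argument. Everything else is mechanical: part (2) is the verbatim dual, obtained by replacing the left congruence $\sim_\mathfrak{r}$ by the right congruence $\sim_\mathfrak{l}$, the $\mathfrak{r}$-classes $R$ by the $\mathfrak{l}$-classes $L$, left translations by right translations, and the containment $R_s \subseteq sS^1$ by $L_s \subseteq S^1 s$. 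Finally, once both composites are established as identities, each restriction is simultaneously injective (it has a left inverse) and surjective (the other composite is onto), hence a bijection, which completes the proof.
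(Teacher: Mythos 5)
Your proof is correct. Note that the paper itself offers no proof of this lemma---it is stated as a classical result with the attribution to Green \cite{G1}---so there is nothing internal to compare against; your argument is the standard one (as in Clifford \& Preston \cite{CP} or Rhodes \& Steinberg \cite{RS}): the left-congruence property of $\sim_\mathfrak{r}$ gives $\lambda_u|_{R_s}: R_s \to R_t$ and $\lambda_v|_{R_t}: R_t \to R_s$, and writing $x = sp \in sS^1$ so that $vux = (vus)p = sp = x$ shows the composites are identities, with part (2) following by left--right duality. Your identification of the containment $R_s \subseteq R(s) = sS^1$ as the non-formal crux is exactly right.
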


Koch \& Wallace \cite{KW} formulated a sufficient condition for $\mathfrak{d}$- and $\mathfrak{j}$-relations to agree with one another. A semigroup $S$ is said to be \emph{stable} if
\begin{compactenum}
	\item $s \sim_\mathfrak{l} ts$ if and only if $s \sim_\mathfrak{j} ts$,
	\item $s \sim_\mathfrak{r} st$ if and only if $s \sim_\mathfrak{j} st$
\end{compactenum}
for any $s, t \in S$. This ensures that $D_s = J_s$ for every $s \in S$. In particular, finite semigroups, commutative semigroups, and compact semigroups are stable. For stable semigroups, Lemma \ref{lem:Green} implies that $\mathfrak{l}$-classes contained in the same $\mathfrak{j}$-class have identical cardinality. The same is true of $\mathfrak{r}$- and $\mathfrak{h}$-classes.

We say $s \in S$ is \emph{regular}, in the sense of von Neumann, if there exists $t \in S$ such that $sts = s$. If, in addition, $tst = t$, $t$ is an \emph{inverse} of $s$. A regular element always has an inverse, and so $s$ is regular if and only if $s$ has an inverse. We call $S$ a \emph{regular semigroup} if each of its elements are regular. If every element has a unique inverse, then $S$ is an \emph{inverse semigroup}.

\begin{definition}
Given sets $\Lambda$ and $\Gamma$, a $\Lambda \times \Gamma$ Rees matrix over a group $G$ is a map $(u_{\lambda\rho}): \Lambda \times \Gamma \to G$. A Rees semigroup of matrix type is a set
	\begin{equation*}
		\mathfrak{M}(G, \Gamma, \Lambda, (u_{\lambda\rho})) = \{(\rho, g, \lambda) \mid g \in G, \rho \in \Gamma, \lambda \in \Lambda\}
	\end{equation*}
endowed with a product defined by the rule
	\begin{equation*}
		(\rho, g, \lambda)(\gamma, h, \alpha) = (\rho, gu_{\lambda\gamma}h, \alpha).
	\end{equation*}
We call $G$ the structure group of $\mathfrak{M}(G, \Gamma, \Lambda, (u_{\lambda\rho}))$.
\end{definition}

It is easy to see that $\mathfrak{M}(G, \Gamma, \Lambda, (u_{\lambda\rho}))$ is indeed a semigroup. By convention, we write
	\begin{equation*}
		\mathfrak{M}^0(G, \Gamma, \Lambda, (u_{\lambda\rho})) = \mathfrak{M}(G^0, \Gamma, \Lambda, (u_{\lambda\rho})).
	\end{equation*}
Moreover, $(u_{\lambda\rho})$ is called \emph{regular} if every row and column has a nonzero entry, which is the same as saying $\mathfrak{M}^0(G, \Gamma, \Lambda, (u_{\lambda\rho}))$ is regular as a semigroup.

Suppose $0 \in S$ and $S^2 \neq 0$. Then $S$ said to be \emph{$0$-simple} if it does not contain a nonzero proper ideal. It is easy to see that if $0 \notin S$, then $S$ is simple if and only if $S^0$ is $0$-simple. Under the stability assumption, Rees \cite{R2} classified $0$-simple semigroups in terms of Rees matrices.

\begin{theorem}[Rees] \label{thm:Rees}
A stable semigroup $S$ is $0$-simple if and only if
	\begin{equation*}
		S \cong \mathfrak{M}^0(G, \Gamma, \Lambda, (u_{\lambda\rho}))
	\end{equation*}
such that $G$ is a group and $(u_{\lambda\rho})$ is regular.
\end{theorem}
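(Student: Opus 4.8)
The theorem has two directions, and I would treat them asymmetrically. The easy direction is to verify that every regular $\mathfrak{M}^0(G, \Gamma, \Lambda, (u_{\lambda\rho}))$ is in fact $0$-simple. Here I would compute directly: take a nonzero element $(\rho, g, \lambda)$ and show that the ideal it generates is everything. Given any target $(\gamma, h, \alpha)$, regularity of $(u_{\lambda\rho})$ lets me pick a nonzero entry $u_{\mu\rho}$ in column $\rho$ and a nonzero entry $u_{\lambda\nu}$ in row $\lambda$, and then a short multiplication $(\gamma, k, \mu)(\rho, g, \lambda)(\nu, \ell, \alpha)$ with suitable $k, \ell \in G$ lands on $(\gamma, h, \alpha)$. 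This shows the two-sided ideal generated by any nonzero element is all of $\mathfrak{M}^0$, which is exactly $0$-simplicity once one checks $S^2 \neq 0$.

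The substantive direction is the converse. The plan is to start from a stable $0$-simple semigroup $S$, locate an idempotent, and use it to extract the coordinates. First I would show $S$ contains a nonzero idempotent $e$: since $S$ is stable and $0$-simple, it is regular (the ideal-theoretic hypotheses force some nonzero $s$ to satisfy $s \sim_\mathfrak{j} s^2$, and stability upgrades this to the existence of an idempotent in $D_s$ via the regularity criterion), so pick any idempotent $e \neq 0$. The structure group will be the group $\mathcal{H}$-class $H_e$, which is a group precisely because $e$ is idempotent and $eSe \cap J_e = H_e$ by Proposition~\ref{prop:Identities}. I would then index the $\mathfrak{r}$-classes of $D_e$ by $\Gamma$ and the $\mathfrak{l}$-classes by $\Lambda$, so that each nonzero element of $S$ sits in a unique intersection $R_\rho \cap L_\lambda$, which is a single $\mathfrak{h}$-class.

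Next I would build the bijection $S \setminus \{0\} \to \Gamma \times G \times \Lambda$. The key tool is Lemma~\ref{lem:Green}: for each $\mathfrak{r}$-class and each $\mathfrak{l}$-class I fix translation maps carrying $H_e$ bijectively onto the $\mathfrak{h}$-class $R_\rho \cap L_\lambda$, so that every element acquires coordinates $(\rho, g, \lambda)$ with $g \in H_e \cong G$. Stability is what guarantees $D_e = J_e$, so that all nonzero elements lie in the single $\mathfrak{d}$-class $D_e$ and the $\mathfrak{h}$-classes all have the same cardinality $|G|$ — without this the coordinatization would not be uniform. The structure matrix $(u_{\lambda\rho})$ then records the "defect" in composing a representative of $L_\lambda$ with a representative of $R_\rho$, i.e.\ the element of $G^0$ by which the product $R_\rho \cap L_\lambda$ times $R_\gamma \cap L_\mu$ deviates from the naive concatenation; regularity of the matrix is forced by every $\mathfrak{r}$- and $\mathfrak{l}$-class of $D_e$ being nonempty.

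The main obstacle I expect is verifying that the product in $S$, once transported through these chosen translation bijections, actually matches the Rees product rule $(\rho, g, \lambda)(\gamma, h, \alpha) = (\rho, g u_{\lambda\gamma} h, \alpha)$. The outer coordinates $\rho$ and $\alpha$ behave well because $\sim_\mathfrak{l}$ is a right congruence and $\sim_\mathfrak{r}$ is a left congruence, but checking that the middle group coordinate multiplies correctly requires a careful bookkeeping of how the fixed translations interact — essentially confirming that the $u_{\lambda\gamma}$ defined from the idempotent is genuinely consistent across all the $\mathfrak{h}$-classes. This compatibility is where the proof does its real work, and it rests squarely on Green's lemma guaranteeing that the relevant translations are mutually inverse bijections.
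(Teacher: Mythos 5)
The paper never proves Theorem \ref{thm:Rees}: it is quoted as a classical result and attributed to Rees \cite{R2} (with the general theory in \cite{CP} and \cite{RS}), so there is no internal proof to compare yours against. Your outline is the standard Rees--Suschkewitsch coordinatization, and its architecture is sound: the direct computation showing a regular $\mathfrak{M}^0(G, \Gamma, \Lambda, (u_{\lambda\rho}))$ is $0$-simple; then, for the converse, production of a nonzero idempotent $e$, the choice $G = H_e$ with $\Lambda$, $\Gamma$ indexing the $\mathfrak{l}$- and $\mathfrak{r}$-classes of $D_e = S \setminus \{0\}$ (stability giving $D_e = J_e$), coordinates via the translation bijections of Lemma \ref{lem:Green}, and a structure matrix recording products of fixed representatives $q_\lambda \in L_\lambda \cap R_e$ and $r_\gamma \in R_\gamma \cap L_e$, so that $(r_\rho g q_\lambda)(r_\gamma h q_\alpha) = r_\rho (g u_{\lambda\gamma} h) q_\alpha$ with $u_{\lambda\gamma} = q_\lambda r_\gamma$. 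This is exactly the proof in Clifford \& Preston, and you correctly locate where the real bookkeeping lies.

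Two steps are under-justified, and one of the stated reasons is not a reason at all. First, the idempotent: you assert the hypotheses ``force some nonzero $s$ to satisfy $s \sim_\mathfrak{j} s^2$,'' but a priori every element of a $0$-simple semigroup could have square zero, so this needs an argument. A short one: $0$-simplicity gives $SsS = S$ for $s \neq 0$ (using $S^2 = S$), so $s = xsy$ for some $x, y \in S$; then $sy \neq 0$ (else $s = x(sy) = 0$), and from $sy = x(sy^2)$ also $sy^2 \neq 0$, hence $y^2 \neq 0$. Since $S \setminus \{0\}$ is a single $\mathfrak{j}$-class, $y \sim_\mathfrak{j} y^2$, stability upgrades this to $y \sim_\mathfrak{h} y^2$, and Green's group criterion (an $\mathfrak{h}$-class meeting its own square is a group, a consequence of Lemma \ref{lem:Green}) supplies the idempotent --- that criterion is what you allude to, but the nonzero-square step must be provided. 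Second, regularity of $(u_{\lambda\rho})$: ``every $\mathfrak{r}$- and $\mathfrak{l}$-class of $D_e$ being nonempty'' is vacuous, since equivalence classes are nonempty by definition, and it does not produce nonzero entries. What is actually needed is that $D_e$ is a \emph{regular} $\mathfrak{j}$-class --- the paper records, just after Theorem \ref{thm:Rees}, that under stability regularity propagates from $e$ to all of $J_e$ --- so that every $L_\lambda$ and every $R_\gamma$ contains an idempotent $f$; then writing $r_\gamma = fu$ and using $q_\lambda f = q_\lambda$, Lemma \ref{lem:Green} gives $q_\lambda r_\gamma = q_\lambda u \in H_e$, the required nonzero entry in row $\lambda$ (dually for columns). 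With these two repairs, your sketch becomes a complete and correct proof.
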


Assume $S$ is stable. If $s \in S$ is regular, then every element of $J_s$ is regular. Moreover, there exists an idempotent $e \in J_s$ such that $H_e$ is a maximal subgroup of $S$ with $e$ as identity, and $H_e \cong H_f$ for any idempotent $f \in J_s$.

For every $s \in S$, set $I(s) = J(s) - J_s$. Then $I(s)$ is an ideal of $J(s)$ unless it is empty. The \emph{principal factor} of $S$ at $s$ is the semigroup
	\begin{equation*}
		J_s^0 = \begin{cases}
			J(s)/I(s) & \text{if } J_s \text{ is not the minimal ideal}, \\
			J_s \cup 0 & \text{otherwise}.
		\end{cases}
	\end{equation*}
Alternatively, we can think of $J_s^0$ as the set $J_s \cup 0$ endowed with a product given by the rule
	\begin{equation*}
		tu = \begin{cases}
			tu & \text{if } tu \in J_s, \\
			0 & \text{otherwise}.
		\end{cases}
	\end{equation*}
If $S$ is stable, $J_s$ is regular if and only if $J_s^0$ is $0$-simple, in which case, by Theorem \ref{thm:Rees}, there is an isomorphism $J_s^0 \to \mathfrak{M}^0(G, \Gamma, \Lambda, (u_{\lambda\rho}))$. If $J_s$ is nonregular, then $J_s^0$ is a \emph{null semigroup} in which $tu = 0$ for all $t, u \in J_s$.

\subsection{Local Structure of Transition Semigroups}

Any matrix over $\mathbb{R}$ is said to be \textit{stochastic} if all entries are nonnegative and each row sums to unity. We write $\mathrm{S}(n, \mathbb{R})$ for the monoid of $n \times n$ stochastic matrices over $\mathbb{R}$. A stochastic matrix is \textit{bistochastic} if each column sums to unity. The submonoid of bistochastic matrices in $\mathrm{S}(n, \mathbb{R})$ is denoted $\mathrm{B}(n, \mathbb{R})$. We can also define a stochastic matrix over any proper unitary subring of $\mathbb{R}$. In particular, $\mathrm{S}(n, \mathbb{Z})$ is the monoid of maps $\mathbf{n} \to \mathbf{n}$ and $\mathrm{B}(n, \mathbb{Z})$ is the group of permutations on $\mathbf{n}$.

We associate with each $s \in S$ a matrix $(s_{xy}): X \times X \to [0, 1]$ with $(x, y) \mapsto \delta_{xs}^{y}$, where $\delta_x^y$ is the Kronecker delta on $X \times X$. Clearly, $(s_{xy})$ is row monomial, and hence
	\begin{equation*}
		(\mu_{xy}) = \sum_{s \in S} \mu(s) \cdot (s_{xy})
	\end{equation*}
is stochastic for any $\mu \in \mathbb{P}S$. It is readily verified that
	\begin{equation*}
		((\mu \ast \nu)_{xy}) = (\mu_{xy})(\nu_{xy}).
	\end{equation*}
For any finite semigroup $S$, $\mathbb{P}S$ is isomorphic to a subsemigroup of $\mathbb{P}F_n \cong \mathrm{S}(n, \mathbb{R})$, and so we first study Green's relations on $\mathrm{S}(n, \mathbb{R})$. Schwarz \cite{S3} showed that every maximal subgroup is isomorphic to a symmtric group $S_k$ for some $1 \leq k \leq n$. Wall \cite{W} characterized $\mathfrak{l}$- and $\mathfrak{r}$-relations for regular elements of $\mathrm{S}(n, \mathbb{R})$. Green's relations on $\mathrm{B}(n, \mathbb{R})$ were resolved by Montague \& Plemmons \cite{MP}.

Let $(s_{ij}) \in \mathrm{S}(n, \mathbb{R})$. In block matrix form, $0$ and $1$, respectively, stand for the zero and identity matrices of suitable size. There exists $(p_{ij}) \in \mathrm{B}(n, \mathbb{Z})$ such that
	\begin{equation*}
		(p_{ij})(s_{ij}) = \begin{pmatrix} s_0^t \\ s_1^t \end{pmatrix},
	\end{equation*}
where rows of $s_0^t$ are linearly independent vectors that generate the same convex cone as rows of $(s_{ij})$. A \emph{row echelon form} of $(s_{ij})$ is any matrix of the form
	\begin{equation*}
		\begin{pmatrix} 1 & 0 \\ u & 0 \end{pmatrix} (p_{ij})(s_{ij}),
	\end{equation*}
where $u$ is stochastic. We call $s_0^t$ a \emph{reduced row echelon form} of $(s_{ij})$, which is unique up to row permutation. A pair of elements of $\mathrm{S}(n, \mathbb{R})$ is \emph{row equivalent} if they have identical reduced row echelon form up to row permuation.

If $(s_{ij})$ has a pair of nonzero columns in the same direction, then they appear as the first two columns of $(s_{ij})(p_{ij})$ for some $(p_{ij}) \in \mathrm{B}(n, \mathbb{Z})$. Their sum, whose direction remains unchanged, is the first column of
	\begin{equation*}
		(s_{ij})(p_{ij}) \begin{pmatrix} e & 0 \\ 0 & 1 \end{pmatrix},
	\end{equation*}
where the leftmost entries of $e \in \mathrm{B}(2, \mathbb{Z})$ are unity. We can repeat this process of adding up columns in the same direction until the matrix is in \emph{column echelon form}
	\begin{equation*}
		\begin{pmatrix} s_0 & s_1 \end{pmatrix},
	\end{equation*}
where nonzero columns are pairwise in different directions and columns of $s_0$, which are linearly independent, generate the same convex cone as columns of $(s_{ij})$. The \emph{reduced column echelon form} of $(s_{ij})$, which is unique up to column permutation, is obtained by removing any zero columns from $a_1$. When a pair of elements of $\mathrm{S}(n, \mathbb{R})$ have identical reduced column echelon form up to column permutation, we say that they are \emph{column equivalent}.

The \emph{echelon form} of $(s_{ij})$ is the row echelon form of the column echelon form of $(s_{ij})$. This is the same as the column echelon form of the row echelon form of $(s_{ij})$ as matrix multiplication is associative. If the \emph{reduced echelon form} is defined accordingly, then it is unique up to row and column permutations. A pair of elements of $\mathrm{S}(n, \mathbb{R})$ is called \emph{equivalent} if they have identical reduced echelon form up to row and column permutations.

\begin{proposition} \label{prop:Stochasticity}
If $(s_{ij}), (t_{ij}) \in \mathrm{S}(n, \mathbb{R})$, then
	\begin{compactenum}
		\item $(s_{ij}) \sim_\mathfrak{l} (t_{ij})$ if and only if $(s_{ij})$ and $(t_{ij})$ are row equivalent,
		\item $(s_{ij}) \sim_\mathfrak{r} (t_{ij})$ if and only if $(s_{ij})$ and $(t_{ij})$ are column equivalent,
		\item $(s_{ij}) \sim_\mathfrak{j} (t_{ij})$ if and only if $(s_{ij})$ and $(t_{ij})$ are equivalent,
		\item $(s_{ij}) \sim_\mathfrak{h} (t_{ij})$ if and only if $(s_{ij})$ and $(t_{ij})$ are row and column equivalent.
	\end{compactenum}
\end{proposition}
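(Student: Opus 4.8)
The plan is to reduce all four assertions to a single geometric principle: left multiplication by a stochastic matrix replaces the rows of a matrix by convex combinations of them, whereas right multiplication replaces the columns by nonnegative (conical) combinations. The asymmetry is exactly that a row of $P$ sums to unity while a column of $Q$ need not, so that the row of $Pt$ indexed by $i$ is the convex combination $\sum_k P_{ik}(\text{row}_k\, t)$, while the column of $sQ$ indexed by $j$ is the conical combination $\sum_k Q_{kj}(\text{col}_k\, s)$. Since $\mathrm{S}(n,\mathbb{R})$ is a monoid, $S^1 = \mathrm{S}(n,\mathbb{R})$, so $s \leq_\mathfrak{l} t$ means precisely $s = Pt$ for some stochastic $P$ (each row of $s$ lies in the convex hull $H_r(t)$ of the rows of $t$), and $s \leq_\mathfrak{r} t$ means $s = tQ$ for stochastic $Q$ (each column of $s$ lies in the convex cone $C_c(t)$ of the columns of $t$).

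For (1) I would show that $\mathrm{S}(n,\mathbb{R})\,s$ consists of exactly those stochastic matrices all of whose rows lie in $H_r(s)$, and that $H_r(s)$ is recovered from this ideal as the union of the rows of its members, each point of $H_r(s)$ occurring as the repeated row of a rank-one member. Hence $\mathrm{S}(n,\mathbb{R})\,s = \mathrm{S}(n,\mathbb{R})\,t$ if and only if $H_r(s) = H_r(t)$, and it remains only to identify equality of row hulls with agreement of reduced row echelon forms up to row permutation; this is the content of the echelon construction, whose output is a canonical set of generators of $H_r(s)$. Part (2) is the column-dual, with one wrinkle: cone containment must be witnessed by a stochastic, not merely nonnegative, multiplier. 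I would resolve this through the reduction itself, since each elementary column operation (permuting columns, or summing two parallel columns) is right multiplication by an explicit stochastic matrix, and the zero columns created in passing to the reduced form supply free stochastic rows in the inverse multiplier; thus $s$ and its reduced column echelon form are genuinely $\mathfrak{r}$-equivalent. Part (4) is then immediate, as $\mathfrak{h} = \mathfrak{l} \cap \mathfrak{r}$ by definition, so $s \sim_\mathfrak{h} t$ exactly when $s$ and $t$ are both row and column equivalent.

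For (3) I would use that $\mathrm{S}(n,\mathbb{R})$ is compact, hence stable, so $\mathfrak{j} = \mathfrak{d} = \mathfrak{r} \circ \mathfrak{l}$. Writing $E(s)$ for the reduced echelon form, two facts finish the argument. First, $s \sim_\mathfrak{d} E(s)$: by part (2), $s$ is $\mathfrak{r}$-equivalent to its column echelon form, and by part (1) the latter is $\mathfrak{l}$-equivalent to its row echelon form, which is $E(s)$. Second, $E(\cdot)$ is constant on $\mathfrak{d}$-classes: invariance under $\sim_\mathfrak{l}$ holds because row-equivalent matrices share a reduced row echelon form and $E$ is the column echelon form of the row echelon form, and invariance under $\sim_\mathfrak{r}$ holds dually because $E$ is also the row echelon form of the column echelon form, the two presentations agreeing by associativity. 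Combining these, $s \sim_\mathfrak{j} t$ forces $E(s) = E(t)$ up to row and column permutation, i.e. equivalence, and conversely equivalence gives $s \sim_\mathfrak{j} E(s) = E(t) \sim_\mathfrak{j} t$.

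The main obstacle I anticipate is precisely part (3): reconciling the a priori two-step description of $\mathfrak{d}$, namely the existence of an intermediate $w$ with $s \sim_\mathfrak{l} w \sim_\mathfrak{r} t$, with the single canonical object $E$. All the leverage lies in the commutation of row and column reduction — that computing the echelon form row-first or column-first yields the same matrix up to permutation — which is what allows $E$ to serve simultaneously as an $\mathfrak{l}$-invariant and an $\mathfrak{r}$-invariant. A secondary technical point, already flagged in part (2), is ensuring that every hull or cone containment is realized by a stochastic rather than a merely nonnegative matrix; the zero rows and columns produced by the reduction provide exactly the slack required, and I would verify this explicitly for each elementary operation before composing them.
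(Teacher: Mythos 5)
Your parts (1), (3) and (4) follow essentially the paper's own route: the paper also gets (1) from the convex-hull description of left multiplication plus the echelon construction, gets (3) from stability of the compact monoid $\mathrm{S}(n,\mathbb{R})$ (so $\mathfrak{j}=\mathfrak{d}=\mathfrak{l}\circ\mathfrak{r}$) combined with (1) and (2), and gets (4) formally from (1) and (2). The genuine gap is in part (2), and it is not the technicality you flagged: you prove only that column equivalence implies $\sim_\mathfrak{r}$ (via elementary stochastic column operations, which is exactly the paper's first paragraph of (2)), but you have no argument for the converse, $\sim_\mathfrak{r}\Rightarrow$ column equivalence. Your ``column-dual'' of (1) would need the right ideal $s\,\mathrm{S}(n,\mathbb{R})$ to be characterized by cone containment of columns, and the reduced column echelon form to be determined by the column cone; both statements are false, precisely because the rows of a right multiplier $Q$ must each sum to one, so the conical coefficients used for different columns of $tQ$ are coupled, and the cone forgets the magnitudes that this coupling sees. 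Concretely, take
\begin{equation*}
s=\begin{pmatrix} 1 & 0 & 0\\ 0 & 1 & 0\\ 0 & 1 & 0\end{pmatrix},\qquad
t=\begin{pmatrix} 1/2 & 1/2 & 0\\ 0 & 1/2 & 1/2\\ 0 & 1/2 & 1/2\end{pmatrix}.
\end{equation*}
The columns of both generate the cone spanned by $(1,0,0)^t$ and $(0,1,1)^t$, and every column of $s$ lies in $C_c(t)$; yet $s\notin t\,\mathrm{S}(3,\mathbb{R})$, since the only conical expression of $(1,0,0)^t$ in the columns of $t$ is $2\cdot(1/2,0,0)^t$, forcing a row of the would-be multiplier to sum to at least $2$. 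Consistently, $s$ and $t$ are not column equivalent: their reduced column echelon forms have two and three columns, respectively. So ``same column cone'' is strictly weaker than both $\sim_\mathfrak{r}$ and column equivalence, and the chain cone-equality $\Rightarrow$ equivalence that your dual argument relies on breaks.

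What fills this hole in the paper is a magnitude-sensitive block computation, the real content of its proof of (2): assuming both matrices are in column echelon form $\begin{pmatrix} s_0 & s_1\end{pmatrix}$, $\begin{pmatrix} t_0 & t_1\end{pmatrix}$ and writing the two mutual factorizations in block form, one shows the non-extreme columns cannot contribute to the extreme ones ($u_{10}=0$), so $s_0=t_0\,dp$ with $d$ diagonal positive and $p$ a permutation, and then -- the decisive step invisible at the level of cones -- that stochasticity of the two multipliers forces $d=1$; one recurses on $s_1,t_1$. It is exactly this step that rules out the pair $s,t$ above. Until an argument of this kind is supplied, your part (2) is incomplete, and since your parts (3) and (4) quote (2), the gap propagates to them as well.
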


\begin{proof}
(1) Suppose $(s_{ij}) \sim_\mathfrak{l} (t_{ij})$. Then the rows of $(s_{ij})$ and $(t_{ij})$ generate the same convex cone, and so they must be row equivalent.

Conversely, if $(s_{ij})$ and $(t_{ij})$ are row equivalent, then there exists $(p_{ij}), (q_{ij}) \in \mathrm{B}(n, \mathbb{Z})$ such that
	\begin{equation*}
		(p_{ij})(s_{ij}) = \begin{pmatrix} s_0^t \\ s_1^t \end{pmatrix}
		\text{ and }
		(q_{ij})(t_{ij}) = \begin{pmatrix} t_0^t \\ t_1^t \end{pmatrix}
	\end{equation*}
are in row echelon form with $rs_0^t = t_0^t$ for some permutation $r$. Moreover, every row of $t_1^t$ is contained in the convex hull generated by the rows of $s_0^t$, so that we can find $u$ that is stochastic and satisfies $us_0^t = t_1^t$. Similarly, $vs_0^t = s_1^t$, where $v$ is stochastic. Therefore
	\begin{equation*}
		(q_{ij})^t \begin{pmatrix} r & 0 \\ u & 0 \end{pmatrix} (p_{ij})(s_{ij}) = (t_{ij})
		\text{ and }
		(p_{ij})^t \begin{pmatrix} r^t & 0 \\ v & 0 \end{pmatrix} (q_{ij})(t_{ij}) = (s_{ij}),
	\end{equation*}
and so we are done.

(2) If the first two columns of $(s_{ij})(p_{ij})$ are in the same direction, then for any $u \in \mathrm{S}(2, \mathbb{R})$ of rank one, we can always find $v \in \mathrm{S}(2, \mathbb{R})$ of rank one such that
	\begin{equation*}
		(s_{ij})(p_{ij}) \begin{pmatrix} u & 0 \\ 0 & 1 \end{pmatrix} \begin{pmatrix} v & 0 \\ 0 & 1 \end{pmatrix} = (s_{ij})(p_{ij}).
	\end{equation*}
This shows that $(s_{ij})$ and its column echelon form are $\mathfrak{r}$-related.

Let $(s_{ij}) \sim_\mathfrak{r} (t_{ij})$. We can assume $(s_{ij})$ and $(t_{ij})$ are in column echelon form. Then there exist $(u_{ij}), (v_{ij}) \in \mathrm{S}(n, \mathbb{R})$ such that
	\begin{equation*}
		\begin{pmatrix} s_0 & s_1 \end{pmatrix} = \begin{pmatrix} t_0 & t_1 \end{pmatrix} \begin{pmatrix} u_{00} & u_{01} \\ u_{10} & u_{11} \end{pmatrix}
		\text{ and }
		\begin{pmatrix} t_0 & t_1 \end{pmatrix} = \begin{pmatrix} s_0 & s_1 \end{pmatrix} \begin{pmatrix} v_{00} & v_{01} \\ v_{10} & v_{11} \end{pmatrix}.
	\end{equation*}
We can now write
	\begin{equation*}
		s_0 = t_0u_{00} + t_1u_{10}.
	\end{equation*}
Columns of $s_0$ generate the same convex cone as those of $t_0$, and hence $s_0 = t_0dp$, where $d$ is diagonal and $p$ a permutation. Furthermore, columns of $t_1$ are properly contained in the convex cone generated by those of $t_0$, so that $t_1 = t_0w$ for some $w$ that has at least two positive entries in every column. This implies that $u_{10} = 0$, whence $t_0(dp - u_{00}) = 0$. As columns of $t_0$ are linearly independent, it follows that $u_{00} = dp$. By a similar reasoning for
	\begin{equation*}
		t_0 = s_0v_{00} + s_1v_{10},
	\end{equation*}
we can deduce that $v_{00} = p^td^{-1}$ and $v_{10} = 0$. This shows $d = 1$, or else $(u_{ij})$ or $(v_{ij})$ fails to be stochastic. It is immediate that $u_{01} = v_{01} = 0$, and so $s_1 = t_1u_{11}$ and $t_1 = s_1v_{11}$. If nonzero columns of $s_1$ and $t_1$ are linearly independent, we are done. Otherwise, we can repeat this argument for $s_1$ and $t_1$. This process ends in finite steps, and thus the result follows.

(3) By stability, $(s_{ij}) \sim_\mathfrak{j} (t_{ij})$ if and only if there exists $(u_{ij}) \in \mathrm{S}(n, \mathbb{R})$ such that $(s_{ij}) \sim_\mathfrak{l} (u_{ij})$ and $(u_{ij}) \sim_\frak{r} (t_{ij})$, which is the same as saying the reduced column echelon form of the reduced row echelon form of $(s_{ij})$ is identical to the reduced column echelon form of the reduced row echelon form of $(t_{ij})$ up to row and column permutations.

(4) This is a direct consequence of (1) and (2).
\end{proof}

Every compact semigroup contains an idempotent, so that $J_\mu$ is regular for some $\mu \in \mathbb{P}S$. Doob \cite{D} identified all idempotent elements in $\mathrm{S}(n, \mathbb{R})$.

\begin{theorem}[Doob] \label{thm:Doob}
If $(e_{ij}) \in \mathrm{S}(n, \mathbb{R})$ is of rank $k$ with $1 \leq k \leq n$, then $(e_{ij})$ is idempotent if and only if there exists $(p_{ij}) \in \mathrm{B}(n, \mathbb{Z})$ such that
	\begin{equation*}
		(p_{ij})(e_{ij})(p_{ij})^t = \begin{pmatrix} e & 0 \\ se & 0 \end{pmatrix},
	\end{equation*}
where $s$ is stochastic and $e$ is of the form
	\begin{equation*}
		e = \begin{pmatrix} e_1 & & \\ & \ddots & \\ & & e_k \end{pmatrix}
	\end{equation*}
such that $e_i$ is rank one and stochastic for $1 \leq i \leq k$.
\end{theorem}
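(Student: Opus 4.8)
The plan is to treat the two implications separately, with the reverse (sufficiency) direction being a routine verification and the forward direction carrying all the content. For sufficiency, first note that conjugation by a permutation matrix $(p_{ij}) \in \mathrm{B}(n,\mathbb{Z})$ preserves both idempotency and rank, since $(p_{ij})^t = (p_{ij})^{-1}$. A rank-one stochastic matrix has all rows equal to a common probability vector, so $e_i = \mathbf{1}v_i^t$ with $v_i^t\mathbf{1} = 1$; hence $e_i^2 = \mathbf{1}(v_i^t\mathbf{1})v_i^t = e_i$, and consequently $e = \mathrm{diag}(e_1,\dots,e_k)$ satisfies $e^2 = e$. A direct block computation then gives
\[
\begin{pmatrix} e & 0 \\ se & 0 \end{pmatrix}^2 = \begin{pmatrix} e^2 & 0 \\ se^2 & 0 \end{pmatrix} = \begin{pmatrix} e & 0 \\ se & 0 \end{pmatrix},
\]
so the conjugate, and therefore $(e_{ij})$ itself, is idempotent; its rank is $k$ because the first block column equals $\begin{pmatrix} 1 \\ s \end{pmatrix}e$, which has the same rank as $e$.

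For necessity, I would read $(e_{ij}) = E$ as the transition matrix of a Markov chain on $\mathbf{n}$ and exploit the single most useful consequence of idempotency: since $E^m = E$ for every $m \geq 1$, a state $j$ is reachable from $i$ in \emph{any} number of steps precisely when $e_{ij} > 0$, so the reachability preorder is read off directly from the positive entries of $E$. Partition $\mathbf{n}$ into its essential (closed) communicating classes $C_1,\dots,C_{k'}$ and the remaining transient states $T$, and choose $(p_{ij})$ listing the classes first and $T$ last. Closedness makes the essential-to-transient block vanish and the essential part block diagonal, $\mathrm{diag}(E_{C_1},\dots,E_{C_{k'}})$. Each $E_{C_a}$ is an irreducible idempotent stochastic matrix; irreducibility together with $E_{C_a}^m = E_{C_a}$ forces every entry of $E_{C_a}$ to be positive, so $E_{C_a}$ is primitive, and Perron--Frobenius identifies $E_{C_a} = \mathbf{1}\pi_a^t$ as rank one.

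It remains to match $k'$ with the rank $k$ and to clear the transient blocks, which I expect to be the crux. Because $E$ is idempotent its image equals its fixed space, of dimension $k$; a fixed vector $Ev = v$ is forced to be constant on each $C_a$ (as $E_{C_a}$ is rank one) and is then \emph{uniquely} determined on $T$ by a maximum-principle argument: a nonzero fixed vector vanishing on every $C_a$ would attain its maximum at a transient state, propagate that maximum along reachable states, and reach some essential class where it vanishes, a contradiction. Hence the fixed space has dimension $k'$, giving $k' = k$. Writing the reordered matrix as $\begin{pmatrix} e & 0 \\ B & Q \end{pmatrix}$, the relation $E^m = E$ shows every transient state reaches an essential state in one step, so every row of $Q$ sums to strictly less than $1$; thus $\|Q\|_\infty < 1$, whence $\rho(Q) < 1$ and $Q = \lim_m Q^m = 0$. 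Squaring the block matrix now yields $Be = B$ with $B$ stochastic (its rows sum to $1$ since $Q = 0$), so taking $s = B$ gives the desired form. The main obstacle is the rank count $k' = k$ together with the verification $Q = 0$; the observation that $E^m = E$ collapses multi-step reachability to single positive entries is what makes both arguments go through.
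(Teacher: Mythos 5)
The paper never proves this theorem: it is stated as a quotation of Doob's classical result \cite{D} and used as a black box (only Corollary \ref{cor:Idempotency} is derived from it), so there is no internal proof to compare against; your argument supplies a proof where the paper has none. Your proof is correct, and its engine --- the observation that idempotency collapses $m$-step reachability to positivity of a single entry of $E$ --- is exactly what makes the Markov-chain decomposition go through: it forces every entry of each closed-class block $E_{C_a}$ to be positive (hence $E_{C_a} = \lim_m E_{C_a}^m = \mathbf{1}\pi_a^t$ is rank one by Perron--Frobenius), and it gives the one-step escape from every transient state that yields $\|Q\|_\infty < 1$, hence $\rho(Q) < 1$ and $Q = \lim_m Q^m = 0$ since the block-triangular structure gives $Q^m = Q$. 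The sufficiency computation is routine and correct. One soft spot: your fixed-space argument, as written, only proves $\dim \mathrm{Fix}(E) \leq k'$ (constancy on classes plus the maximum-principle uniqueness on $T$ is an injection of $\mathrm{Fix}(E)$ into $\mathbb{R}^{k'}$); equality would additionally require exhibiting a fixed vector realizing arbitrary prescribed class values, e.g. via absorption probabilities. But you can discard that argument entirely: the reverse inequality $k \geq k'$ is immediate because the essential rows of the reordered matrix form the submatrix $\begin{pmatrix} e & 0 \end{pmatrix}$ of rank $k'$; alternatively, once you have established $Q = 0$ and $Be = B$, the matrix has the displayed form $\begin{pmatrix} e & 0 \\ se & 0 \end{pmatrix}$, whose rank is visibly $k'$, so $k' = \mathrm{rank}(E) = k$ falls out at the end for free. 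With that one repair the proof is complete and self-contained, and it is very much in the spirit of Doob's original treatment of idempotent transition matrices of Markov chains.
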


We can count the number of distinct regular $\mathfrak{j}$-classes in $\mathrm{S}(n, \mathbb{R})$ once it is known which idempotent elements belong to the same $\mathfrak{j}$-class.

\begin{corollary} \label{cor:Idempotency}
If $(e_{ij})$ and $(f_{ij})$ are idempotent in $\mathrm{S}(n, \mathbb{R})$, then $(e_{ij}) \sim_\mathfrak{j} (f_{ij})$ if and only if $\mathrm{rank}(e_{ij}) = \mathrm{rank}(f_{ij})$.
\end{corollary}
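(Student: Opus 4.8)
The plan is to prove the two implications separately, using that the ordinary rank of a real matrix is a $\mathfrak{j}$-invariant for the forward direction, and Doob's normal form (Theorem \ref{thm:Doob}) together with Proposition \ref{prop:Stochasticity}(3) for the converse.

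For the ``only if'' direction I would exploit that $\mathrm{S}(n,\mathbb{R})$ is a monoid with identity $I_n$, so $S^1 = S$ and $(e_{ij}) \sim_\mathfrak{j} (f_{ij})$ means there are stochastic matrices $a,b,c,d$ with $(f_{ij}) = a(e_{ij})b$ and $(e_{ij}) = c(f_{ij})d$. Since $\mathrm{rank}(AB) \le \min\{\mathrm{rank}(A),\mathrm{rank}(B)\}$ for real matrices, the first equation gives $\mathrm{rank}(f_{ij}) \le \mathrm{rank}(e_{ij})$ and the second the reverse inequality, whence equality. (Note this half needs no idempotency and holds for arbitrary elements.)

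For the converse I would show that any idempotent of rank $k$ has reduced echelon form exactly $I_k$; by Proposition \ref{prop:Stochasticity}(3) two such idempotents are then equivalent and hence $\mathfrak{j}$-related. Rank is unchanged by the conjugation in Theorem \ref{thm:Doob}, so I may take
$$(e_{ij}) = \begin{pmatrix} e & 0 \\ se & 0 \end{pmatrix}, \qquad e = \mathrm{diag}(e_1,\ldots,e_k),$$
with each $e_i$ rank-one stochastic and $s$ stochastic. Each rank-one stochastic block $e_i$ has all rows equal to a single distribution $v_i$, so the distinct rows of $e$ are $k$ vectors $w_1,\ldots,w_k$ with pairwise disjoint supports, one per diagonal block, hence linearly independent. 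Because $s$ is stochastic, every row of $se$ is a convex combination of the $w_i$, so the rows of $(e_{ij})$ generate the same convex cone as $w_1,\ldots,w_k$; thus a reduced row echelon form is the $k \times n$ matrix whose rows are $w_1,\ldots,w_k$. Reducing this by columns, every nonzero column coming from block $i$ is a positive multiple of the coordinate direction $\varepsilon_i \in \mathbb{R}^k$, and columns from different blocks point in different directions; summing the columns in direction $\varepsilon_i$ yields $\big(\sum_j v_i(j)\big)\varepsilon_i = \varepsilon_i$ since $v_i$ is a probability distribution. After deleting zero columns this leaves precisely $\varepsilon_1,\ldots,\varepsilon_k$, i.e. the reduced echelon form is $I_k$. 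The identical computation for $(f_{ij})$ finishes the argument.

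The step I expect to be most delicate is the column reduction: I must check that the off-diagonal block $se$ contributes no new extreme ray to the row cone (so it is absent from the row echelon form) and that the within-block columns collapse to exactly $\varepsilon_i$ rather than some block-dependent scalar multiple. Both points hinge on the disjoint-support structure of the diagonal blocks and on each $v_i$ summing to $1$, which is exactly what pins the reduced echelon form down to $I_k$ independently of the particular idempotent.
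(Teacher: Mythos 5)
Your proposal is correct, and its substantive half coincides with the paper's own proof: the paper's entire argument is the observation, via Theorem \ref{thm:Doob}, that the reduced echelon form of a rank-$k$ idempotent is the identity of $\mathrm{S}(k, \mathbb{Z})$, and the computation you spell out --- the diagonal blocks contributing $k$ linearly independent rows $w_1, \ldots, w_k$ with disjoint supports, the rows of $se$ lying in their convex cone, and each block's columns summing to $\varepsilon_i$ because each $v_i$ is a probability distribution --- is exactly the verification the paper leaves implicit. Where you genuinely diverge is the ``only if'' direction. The paper extracts it from the same echelon-form fact together with Proposition \ref{prop:Stochasticity}(3): if the two idempotents are $\mathfrak{j}$-related they are equivalent, so their reduced echelon forms $I_k$ and $I_{k'}$ agree up to permutation, forcing $k = k'$. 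You instead invoke the elementary inequality $\mathrm{rank}(AB) \leq \min\{\mathrm{rank}(A), \mathrm{rank}(B)\}$ applied to $(f_{ij}) = a(e_{ij})b$ and $(e_{ij}) = c(f_{ij})d$, which is available because $\mathrm{S}(n, \mathbb{R})$ is a monoid. Your version of that half is more elementary --- it needs neither Doob's theorem nor Proposition \ref{prop:Stochasticity} --- and, as you note, it proves the stronger statement that ordinary rank is a $\mathfrak{j}$-class invariant for arbitrary, not necessarily idempotent, elements of $\mathrm{S}(n, \mathbb{R})$; what the paper's route buys is economy, since a single echelon-form computation settles both implications at once. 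There is no gap in either direction of your argument.
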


\begin{proof}
Suppose $(e_{ij})$ is of rank $k$. It follows from Theorem \ref{thm:Doob} that there exists $(p_{ij}) \in \mathrm{B}(n, \mathbb{Z})$ such that the reduced echelon form of $(p_{ij})(e_{ij})(p_{ij})^t$ is an identity in $\mathrm{S}(k, \mathbb{Z})$. This completes the proof.
\end{proof}

It is immediate from Corollary \ref{cor:Idempotency} that there are $n$ regular $\mathfrak{j}$-classes in $\mathrm{S}(n, \mathbb{R})$. In general, we cannot say that if $(e_{ij}) \sim_\mathfrak{j} (f_{ij})$ in $\mathrm{S}(n, \mathbb{R})$, then $(e_{ij}) \sim_\mathfrak{j} (f_{ij})$ in a proper subsemigroup of $\mathrm{S}(n, \mathbb{R})$. Consider, for example, the subsemigroup
	\begin{equation*}
		\left\{ \begin{pmatrix} 1 & 0 & 0 \\ 0 & 0 & 1 \\ 0 & 0 & 1 \end{pmatrix}, \begin{pmatrix} 0 & 0 & 1 \\ 0 & 1 & 0 \\ 0 & 0 & 1 \end{pmatrix}, \begin{pmatrix} 0 & 0 & 1 \\ 0 & 0 & 1 \\ 0 & 0 & 1 \end{pmatrix} \right\}
	\end{equation*}
of $\mathrm{S}(3, \mathbb{R})$. It is true, however, that if $t$ and $u$ are regular in a subsemigroup $T$ of $S$, then $t \sim_\mathfrak{l} u$ in $T$ if and only if $t \sim_\mathfrak{l} u$ in $S$. Analogous statements hold for $\mathfrak{r}$- and $\mathfrak{h}$-relations.

\begin{theorem} \label{thm:Coordinate}
Suppose $(X, S, \mathbb{P})$ is a transition semigroup such that $\varphi: \mathbb{P}S \to T$ is an isomorphism, where $n = |X|$ and $T$ is a subsemigroup of $\mathrm{S}(n, \mathbb{R})$. For any idempotent $e \in \mathbb{P}S$, define $\Lambda = \{\lambda \in T \mid \lambda \sim_\mathfrak{r} e\varphi\}$ and $\Gamma = \{\rho \in T \mid \rho \sim_\mathfrak{l} e\varphi\}$. If $G = H_{e\varphi}$, then
	\begin{equation*}
		J_e^0 \cong \mathfrak{M}^0(G, \Gamma, \Lambda, (u_{\lambda\rho})),
	\end{equation*}
where $(u_{\lambda\rho}): \Lambda \times \Gamma \to G^0$ is given by
	\begin{equation*}
		u_{\lambda\rho} = \begin{cases}
			\lambda\rho & \text{if } \lambda\rho \in G, \\
			0 & \text{otherwise}.
		\end{cases}
	\end{equation*}
Here, $(\rho,g,\lambda) = 0$ in $\mathfrak{M}^0(G, \Gamma, \Lambda, (u_{\lambda\rho}))$ whenever $g = 0$.
\end{theorem}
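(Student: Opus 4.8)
The plan is to transport the problem from $\mathbb{P}S$ to the stochastic model $T$, apply Rees's theorem there, and then identify the abstract ingredients with the concrete data in the statement. Since $\varphi$ is an isomorphism of semigroups it preserves idempotents, Green's relations, and principal factors, so writing $f = e\varphi$ we have $J_e^0 \cong J_f^0$, and it suffices to coordinatize $J_f^0$ inside $T$ with $G = H_f$. Because $\mathbb{P}S$, hence $T$, is compact and therefore stable, and because the idempotent $f$ is regular, the $\mathfrak{j}$-class $J_f$ is regular; by the discussion preceding Theorem \ref{thm:Rees} its principal factor $J_f^0$ is then $0$-simple. Rees's theorem supplies an abstract isomorphism $J_f^0 \cong \mathfrak{M}^0(G', \Gamma', \Lambda', (u'))$, and the remaining work is to match $G'$, the two index sets, and the sandwich matrix against the concrete objects prescribed here.

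For the structure group I would invoke the fact recorded just before the definition of the principal factor: at the regular idempotent $f$ the $\mathfrak{h}$-class $H_f$ is a maximal subgroup with identity $f$, and all such subgroups of $J_f$ are isomorphic, so $G' \cong H_f = G$. To pin down the coordinates I would apply Proposition \ref{prop:Identities} in $T$, which gives $fT \cap J_f = R_f$, $Tf \cap J_f = L_f$, and $fTf \cap J_f = H_f = G$; this is exactly the identification of $\Lambda$ with $R_f$ and of $\Gamma$ with $L_f$ (whose concrete description as column- and row-equivalence classes is furnished by Proposition \ref{prop:Stochasticity}). The key computation is that for $\lambda \in \Lambda$ and $\rho \in \Gamma$ the ideal descriptions force $f\lambda = \lambda$ and $\rho f = \rho$, whence $f(\lambda\rho)f = \lambda\rho$ and so $\lambda\rho \in fTf$. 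Stability then yields the dichotomy that either $\lambda\rho \in H_f = G$ or $\lambda\rho \notin J_f$, which is precisely the definition of $u_{\lambda\rho}$ taking values in $G^0$, so the sandwich matrix is well defined.

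It remains to check that the coordinate assignment is a homomorphism, and this is where I expect the real difficulty to lie. Using the translations of Lemma \ref{lem:Green} to transport along the common $\mathfrak{r}$- and $\mathfrak{l}$-classes, I would compare $\Phi((\rho, g, \lambda)(\gamma, h, \alpha))$ with $\Phi(\rho, g, \lambda)\Phi(\gamma, h, \alpha)$; after the outer coordinates cancel, both sides collapse to the middle product $g(\lambda\gamma)h$, so the entire content reduces to the \emph{location of products}: a product of two elements of $J_f$ remains in $J_f$ exactly when the intervening sandwich entry $u_{\lambda\gamma}$ is nonzero, and otherwise falls into $I(f)$ and is sent to $0$. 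Deriving this dichotomy from stability together with Green's Lemma, rather than bookkeeping with the triples, is the crux of the argument; once it is established, the concrete sandwich data $(u_{\lambda\rho})$ coincides with that produced by Theorem \ref{thm:Rees}, the isomorphism may be taken in the stated form, and transporting back along $\varphi$ gives $J_e^0 \cong \mathfrak{M}^0(G, \Gamma, \Lambda, (u_{\lambda\rho}))$.
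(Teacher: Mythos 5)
Your route is the paper's own: the paper's entire proof is the single sentence that the result ``follows directly from Theorem \ref{thm:Rees} and Proposition \ref{prop:Stochasticity},'' and your scaffolding fills that in correctly as far as it goes --- transport along $\varphi$, stability of $T$ from compactness, regularity of $J_{e\varphi}$, hence $0$-simplicity of the principal factor, the identifications from Proposition \ref{prop:Identities}, and the dichotomy $\lambda\rho \in H_{e\varphi}$ or $\lambda\rho \notin J_{e\varphi}$ for the sandwich entries.

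However, the step you explicitly defer --- ``the isomorphism may be taken in the stated form'' --- is a genuine gap, and it is exactly where the argument breaks. Write $f = e\varphi$. With $\Gamma = L_f$ and $\Lambda = R_f$ read literally as Green classes, the only assignment compatible with the stated sandwich matrix is $\Phi(\rho, g, \lambda) = \rho g \lambda$, and $\Phi$ is not injective: since $\sim_\mathfrak{l}$ is a right congruence and $g \sim_\mathfrak{l} f$, we have $\rho g \in \Gamma$, and $\Phi(\rho g, f, \lambda) = \rho g \lambda = \Phi(\rho, g, \lambda)$ even though $(\rho g, f, \lambda) \neq (\rho, g, \lambda)$ whenever $g \neq f$. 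Worse, when the classes are finite no isomorphism of any kind exists unless $G$ is trivial: if $p$ and $q$ are the numbers of $\mathfrak{r}$- and $\mathfrak{l}$-classes in $J_f$, then all $\mathfrak{h}$-classes in $J_f$ have cardinality $|G|$, so $|\Gamma|\,|G|\,|\Lambda| = pq|G|^3$ while $|J_f| = pq|G|$. A concrete instance sits inside this very paper's framework: take $S = \mathbb{Z}/2$ acting on itself and $e$ the point mass at the identity; then $J_f = H_f \cong \mathbb{Z}/2$, so $\Gamma \times G \times \Lambda$ has eight nonzero elements while $J_e^0$ has two. Consequently the crux you flagged cannot be discharged by stability plus Lemma \ref{lem:Green} alone. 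To complete the proof you must take $\Gamma$ and $\Lambda$ to be transversals of the $\mathfrak{h}$-classes inside $L_f$ and $R_f$ (equivalently, index sets for the $\mathfrak{r}$- and $\mathfrak{l}$-classes of $J_f$), define $u_{\lambda\rho}$ by multiplying those representatives, and only then does Green's Lemma give well-definedness and bijectivity of $\Phi$, with your ``location of products'' dichotomy supplying the homomorphism property. That reindexing is the missing idea --- it is what Theorem \ref{thm:Rees} actually produces, and gliding from the full classes $R_f$, $L_f$ to the Rees coordinates without choosing representatives is precisely the step your proposal (and, to be fair, the theorem's own wording) leaves unjustified.
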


\begin{proof}
This follows directly from Theorem \ref{thm:Rees} and Proposition \ref{prop:Stochasticity}.
\end{proof}

Theorem \ref{thm:Coordinate} carries over to an instance $(X, S, \mathbb{P}, Q)$ of $(X, S, \mathbb{P})$ since $Q'$ is compact, and hence stable.

\section{Global Structure of Probabilistic Automata} \label{sec:4}

\subsection{Krohn-Rhodes Theory}

A pair of transformation semigroups $(X, S)$ and $(Y, T)$ are said to be \emph{isomorphic}, written $(X, S) \cong (Y, T)$, if there exists a bijective map $\varphi: Y \to X$ such that
	\begin{compactenum}
		\item $\varphi{s}\varphi^{-1} \in T$ for all $s \in S$,
		\item $\varphi^{-1}t\varphi \in S$ for all $t \in T$.
	\end{compactenum}
It is easy to see that this implies $S$ is isomorphic to $T$.

\begin{definition}
Let $(X, S)$ and $(Y, T)$ be transformation semigroups. If there exists a surjective partial map $\varphi: Y \to X$ such that for every $s \in S$, $\varphi{s} = t\varphi$ for some $t \in T$, so that the diagram
	\begin{center}
		\begin{tikzpicture}
			\node (0) {$Y$};
			\node (1) [below of=0] {$X$};
			\node (2) [right of=1] {$X$};
			\node (3) [above of=2] {$Y$};
			\draw[->] (0) to node [swap] {$\varphi$} (1);
			\draw[->] (1) to node [swap] {$s$} (2);
			\draw[->] (3) to node {$\varphi$} (2);
			\draw[->] (0) to node {$t$} (3);
		\end{tikzpicture}
	\end{center}
commutes, then $(X, S)$ is said to divide $(Y, T)$ by $\varphi$. We write
	\begin{equation*}
		(X, S) \prec (Y, T)
	\end{equation*}
to mean $(X, S)$ is a divisor of $(Y, T)$, and refer to $\varphi$
as a covering.
\end{definition}

If $T$ is not a monoid, a homomorphism $\varphi: T \to S$ has a natural extension $\varphi^1: T^1 \to S^1$ given by
	\begin{equation*}
		t\varphi^1 =
		\begin{cases}
			1 & \text{if } t = 1, \\
			t\varphi & \text{otherwise}.
		\end{cases}
	\end{equation*}
In case $T$ is a monoid, set $\varphi^1 = \varphi$. We often identify $S$ with the transformation semigroup $(S^1, S)$, and say that $T$ covers $S$ when there is a covering $\varphi^1$, so that $T$ covers $S$ as transformation semigroups.

If $x \in X$, $\bar{x}$ stands for the constant map $X \to X$ onto $x$. The semigroup of all such maps is denoted $\bar{X}$. The \emph{closure} of $(X, S)$ is the transformation semigroup
	\begin{equation*}
		\overline{(X, S)} = (X, S \cup \bar{X}).
	\end{equation*}
As the empty set is vacuously a semigroup, $X$ can be identified with the transformation semigroup $(X, \emptyset)$, in which case $\bar{X} = (X, \bar{X})$. In addition, we associate to $(X, S)$ the transformation monoid
	\begin{equation*}
		(X, S)^1 = (X, S \cup 1_X),
	\end{equation*}
which means $S^1 = (S^1, S^1)$.

\begin{definition} \label{def:WreathProduct}
Let $(X, S)$ and $(Y, T)$ be transformation semigroups. Suppose that the action of $t \in T$ on $f \in S^Y$ is given by $y{\,^t\!f} = ytf$ for any $y \in Y$. Then the wreath product of $(X, S)$ by $(Y, T)$ is the transformation semigroup
	\begin{equation*}
		(X, S) \wr (Y, T) = (X \times Y, S^Y \rtimes T),
	\end{equation*}
where $(x, y)(f, t) = (x(yf), yt)$ for any $(x, y) \in X \times Y$ and $(f, t) \in S^Y \rtimes T$.
\end{definition}

Let $\mathbf{TSgp}$ denote the category in which objects are transformation semigroups and morphisms are coverings of objects. Evidently, $(X, S) \cong (Y, T)$ if and only if $(X, S) \prec (Y, T)$ and $(Y, T) \prec (X, S)$, whence $\prec$ is a partial order on $\mathbf{TSgp}$. In Definition \ref{def:WreathProduct}, it is routine to check that $S^Y \rtimes T$ is a semigroup acting faithfully on $X \times Y$. It follows that isomorphism classes of $\mathbf{TSgp}$ form a monoid under the binary operation $\wr$ with unity $\mathbf{1}^1$. A \emph{decomposition} of $(X, S)$ is an inequality in $\mathbf{TSgp}$ of the form
	\begin{equation*}
		(X, S) \prec (X_1, S_1) \wr \cdots \wr (X_n, S_n)
	\end{equation*}
such that either $X_i$ is strictly smaller than $X$ or $S_i$ is strictly smaller than $S$ for all $1 \leq i \leq n$.

\begin{proposition} \label{prop:Decompositions}
Let $(X, S)$ be a transformation semigroup.
	\begin{compactenum}
		\item If $G$ is a maximal subgroup of $S$, then
			\begin{equation*}
				(X, S) \prec (X, S \backslash G)^1 \wr G.
			\end{equation*}
		\item If $S = I \cup T$, where $I$ is a left ideal in $S$ and $T$ a subsemigroup of $S$, then
			\begin{equation*}
				(X, S) \prec (X, I)^1 \wr \overline{(T \cup 1_X, T)}.
			\end{equation*}
	\end{compactenum}
\end{proposition}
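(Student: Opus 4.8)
The plan is, in each part, to build an explicit covering $\varphi$ of $(X,S)$ defined on the state set $X\times Y$ of the wreath product, and then, for every $s\in S$, to name a single element $(f,t)$ of the wreath-product semigroup $S_1^{\,Y}\rtimes S_2$ whose action transports through $\varphi$ to that of $s$, that is, $\varphi\bigl((x,w)(f,t)\bigr)=\varphi(x,w)\,s$ (Definition \ref{def:WreathProduct}). The covering I would use throughout is the evaluation $\varphi\colon(x,w)\mapsto xw$, with $w\in Y$ acting on $x$ through $S^1$. The recurring idea is that an element which already ``belongs to the base'' is simulated on the second coordinate, while an element which does not is pushed into the first coordinate by a suitable choice of the fibre function $f$.

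For part (2) I would take $Y=T\cup 1_X$, so that $\varphi(x,1_X)=x$ makes $\varphi$ onto, and split on whether $s\in T$ or $s\in I$. If $s\in T$, I simulate it on the base: let $f$ be the constant function $1_X$ and let $t$ be the right translation $w\mapsto ws$, which is the action of $s\in T$ on $Y$; then $(x,w)(f,t)=(x,ws)$ and $\varphi(x,ws)=x(ws)=(xw)s$. If $s\in I$, I push it into the fibre: let $f\colon w\mapsto ws$ and let $t=\overline{1_X}\in\bar Y$ be the constant map onto $1_X$; then $(x,w)(f,t)=(x(ws),1_X)$ and again $\varphi(x(ws),1_X)=(xw)s$. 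The values of $f$ land in $I\cup 1_X$ precisely because $I$ is a left ideal, so that $(f,t)\in(I\cup 1_X)^{Y}\rtimes(T\cup\bar Y)$ as required, and $(X,I)^1$ is a genuine transformation monoid since a left ideal is closed under multiplication.

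For part (1) I would take $Y=G$ and split on whether $s\in G$. A group element $s\in G$ is simulated on the base exactly as above: with $f\equiv 1_X$ and $t=s\in G$ one gets $(x,h)(f,s)=(x,hs)$ and $\varphi(x,hs)=x(hs)=(xh)s$. For $s\notin G$ the subtlety is that $\varphi$ re-applies $h$ on the right, so I must pre-compensate: take $t=e$, the identity of $G$, and $f\colon h\mapsto hsh^{-1}$, giving $(x,h)(f,e)=(x\,hsh^{-1},h)$ and $\varphi(x\,hsh^{-1},h)=x\,hsh^{-1}h=x(hs)=(xh)s$. For this to typecheck, $hsh^{-1}$ must lie in the first factor $(S\setminus G)\cup 1_X$, that is, $hsh^{-1}\notin G$; this holds because $h,h^{-1}\in G$ would otherwise force $s\in G$.

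The step I expect to be the main obstacle is the well-definedness underlying part (1), rather than the commuting squares, which reduce to the one-line cancellations above. Two things must be secured: that $\varphi(x,h)=xh$ is onto $X$, and that the reduction $x\,hsh^{-1}h=x(hs)$ is valid. Both amount to the identity $e$ of $G$ acting as $1_X$ on $X$ (so that $Xe=X$ and $hse=hs$), which is the situation in the intended application where $G$ is the group of units. One must also ensure $(S\setminus G)\cup 1_X$ is closed under composition, so that the first factor is a legitimate transformation monoid; granting these, the two case checks complete the argument.
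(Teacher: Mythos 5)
The paper states Proposition \ref{prop:Decompositions} without proof (it is quoted as background from Krohn--Rhodes theory), so there is no in-paper argument to compare against; your proposal has to stand on its own. Your part (2) does: the evaluation covering $\varphi(x,w)=xw$ on $X\times(T\cup 1_X)$, simulating $s\in T$ by right translation on the base and pushing $s\in I$ into the fibre via $f\colon w\mapsto ws$ while resetting the base with the constant map $\overline{1_X}$, is exactly the classical argument; the left-ideal hypothesis $SI\subset I$ enters precisely where you use it, to keep the fibre values in $I\cup 1_X$, and the commuting squares check out.

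Part (1), however, has a genuine gap, and it is not repairable in the stated generality. A maximal subgroup of $S$ means $H_e$ for some idempotent $e$, and every step where you cancel $h^{-1}h$ silently needs $e$ to act as $1_X$. Concretely: the image of $\varphi(x,h)=xh$ is only $Xe$ (since $Xh=X(he)=(Xh)e\subset Xe$), so $\varphi$ need not be surjective; the reduction $x\,hsh^{-1}h=x(hs)$ really yields $x(hse)$; and --- the point you did not flag --- the typecheck $hsh^{-1}\notin G$ also fails, because from $hsh^{-1}\in G$ you can only conclude $ese=h^{-1}(hsh^{-1})h\in G$, not $s\in G$; cancellation is again being smuggled in. All three failures occur simultaneously in one example that in fact falsifies statement (1) as written: let $S=\{a,b\}$ be the two-element right-zero semigroup ($xy=y$ for all $x,y$) acting on itself, $X=\{a,b\}$, and $G=H_a=\{a\}$. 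Here $S\backslash G=\{b\}$ is a subsemigroup, so $(X,S\backslash G)^1\wr G$ is well formed; it has two states and contains exactly one constant transformation, while any surjective partial map from its two states onto $X$ is a bijection, and covering the two distinct constants $a,b\in S$ through a bijection would require two distinct constant transformations in the cover. Hence no covering exists and $(X,S)\not\prec(X,S\backslash G)^1\wr G$. What your construction does prove --- verbatim, once the three facts you listed are granted, and they are granted there: $1_G=1_X$ gives surjectivity and $hse=hs$, and $S\backslash G$ is then an ideal of the finite monoid $S$ --- is part (1) for the group of units $G$ of a transformation monoid $(X,S)$ with $1_S=1_X$. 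That is the form in which this lemma actually enters the Krohn--Rhodes induction, and it is presumably what the proposition intends; as stated, for an arbitrary maximal subgroup, it is false, so the right move is to record the corrected hypothesis rather than to look for a cleverer covering.
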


Every finite group admits a composition series, which determines a unique collection of simple group divisors. Jordan-H\"older decomposition accounts for all simple group divisors.

\begin{theorem}[Jordan-H\"older] \label{thm:JordanHolder}
If $G$ is a finite group, then
	\begin{equation*}
		G \prec G_1 \wr \cdots \wr G_n,
	\end{equation*}
where $G_i$ is a simple group divisor of $G$ for $1 \leq i \leq n$.
\end{theorem}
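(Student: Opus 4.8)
The plan is to induct on $|G|$, stripping off one simple composition factor at each stage by means of the Krasner--Kaloujnine embedding of a group into the wreath product of a normal subgroup by the corresponding quotient. If $G$ is simple---in particular if $G$ is trivial---the conclusion holds with $n = 1$ and $G_1 = G$. Otherwise, I would fix a maximal normal subgroup $N \trianglelefteq G$, so that $G/N$ is simple, and throughout identify $G$, $N$, and $G/N$ with the transformation groups $(G, G)$, $(N, N)$, and $(G/N, G/N)$ acting on themselves by right multiplication.

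The crux is the division
\[
	(G, G) \prec (N, N) \wr (G/N, G/N).
\]
To produce it I would fix a transversal $\{t_C \mid C \in G/N\}$ of the cosets of $N$ in $G$ and set, for each $g \in G$, a map $f_g \colon G/N \to N$ by the rule $f_g(C) = t_C\, g\, t_{Cg}^{-1}$; this lands in $N$ because $t_C g$ and $t_{Cg}$ lie in the same coset. One checks that $g \mapsto (f_g, Ng)$ is an injective homomorphism $G \to N^{G/N} \rtimes (G/N)$, the homomorphism property amounting to the cocycle identity $f_{gh} = f_g \cdot {}^{Ng}\! f_h$ read off from the twisted action of Definition \ref{def:WreathProduct}, under which $y\,{}^t\! f = ytf$. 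The covering itself is the bijection $\varphi \colon N \times (G/N) \to G$, $(n, C) \mapsto n\, t_C$; a direct computation shows that for $s = g$ the element $t = (f_g, Ng)$ satisfies $(y\varphi)s = (yt)\varphi$ for every $y \in N \times (G/N)$, so that the required square commutes. Verifying these two identities is the main obstacle; everything else is formal.

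With the embedding established, I would invoke the induction hypothesis on $N$ to obtain $N \prec N_1 \wr \cdots \wr N_m$ with each $N_i$ a simple divisor of $N$. Since division is compatible with the wreath product---if $A \prec B$ then $A \wr C \prec B \wr C$, a routine consequence of Definition \ref{def:WreathProduct}---and $\wr$ is associative on isomorphism classes, transitivity of $\prec$ gives
\[
	(G, G) \prec (N, N) \wr (G/N) \prec N_1 \wr \cdots \wr N_m \wr (G/N).
\]
Taking $G_i = N_i$ for $i \le m$ and $G_{m+1} = G/N$ yields a decomposition of the stated shape: each $N_i$ divides $N$ hence $G$, and $G/N$ is a simple divisor of $G$, so every factor is a simple group divisor of $G$. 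That the resulting multiset $\{N_1, \dots, N_m, G/N\}$ does not depend on the choices of $N$ and of the composition series of $N$ is precisely the uniqueness half of the classical Jordan--H\"older theorem, applied to the composition series of $G$ obtained by refining $N \trianglelefteq G$.
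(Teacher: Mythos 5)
Your proposal is correct. Note, however, that the paper contains no proof of Theorem \ref{thm:JordanHolder} to compare against: it records the statement as the classical Jordan--H\"older theorem, and the surrounding prose (``every finite group admits a composition series, which determines a unique collection of simple group divisors'') merely gestures at the argument. What you have written is the standard proof that the paper leaves implicit, and you execute it correctly under the paper's conventions: the inductive step is the Krasner--Kaloujnine embedding $g \mapsto (f_g, Ng)$ of $G$ into $N^{G/N} \rtimes (G/N)$, and your cocycle identity $f_{gh} = f_g \cdot {}^{Ng}\!f_h$ together with the commuting square $(y\varphi)g = \bigl(y(f_g, Ng)\bigr)\varphi$ is exactly what Definition \ref{def:WreathProduct} requires under its right-action convention $y\,{}^t\!f = ytf$. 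Compatibility of $\prec$ with $\wr$ in the left factor, associativity of $\wr$ on isomorphism classes, and transitivity of $\prec$ then close the induction, and every factor is a simple group divisor of $G$ because $N_i \prec N \prec G$ and $G/N$ is a quotient of $G$.

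Two small corrections. First, the trivial group is not simple under the usual convention, so it cannot serve as $G_1$; the base case $|G| = 1$ should instead be handled by $n = 0$, the empty wreath product being the unit $\mathbf{1}^1$ of the monoid of isomorphism classes of $\mathbf{TSgp}$ that the paper sets up. This costs nothing, since your induction never otherwise reaches the trivial group: a maximal normal subgroup of a finite non-simple group of order greater than one is necessarily nontrivial. Second, your closing paragraph on uniqueness of the multiset of factors is not needed for the statement being proved, which asserts only existence; it does, however, align with the paper's surrounding remark about composition series.
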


By Proposition \ref{prop:Decompositions}, we can view Theorem \ref{thm:JordanHolder} as a decomposition for transformation groups. Krohn-Rhodes decomposition generalizes Jordan-H\"older decomposition to transformation semigroups. Krohn and Rhodes \cite{KR} first showed that a finite semigroup is either cyclic, left simple, or the union of a proper left ideal and a proper subsemigroup, and then argued inductively by showing that any transformation semigroup admits a decomposition in $\mathbf{TSgp}$.

\begin{theorem}[Krohn-Rhodes] \label{thm:KrohnRhodes}
If $(X, S)$ is a transformation semigroup, then
	\begin{equation*}
		(X, S) \prec (X_1, S_1) \wr \cdots \wr (X_n, S_n),
	\end{equation*}
where either $(X_i, S_i) = \overline{\mathbf{2}}{^1}$ or $(X_i, S_i)$ is a simple group divisor of $S$ for $1 \leq i \leq n$.
\end{theorem}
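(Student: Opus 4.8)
The plan is to prove the statement by strong induction on the order of $S$, exploiting that $\prec$ is a partial order on $\mathbf{TSgp}$ which is monotone with respect to $\wr$ (if $(X_i,S_i)\prec(Y_i,T_i)$ for each $i$ then $(X_1,S_1)\wr\cdots\wr(X_n,S_n)\prec(Y_1,T_1)\wr\cdots\wr(Y_n,T_n)$) and that $\wr$ is associative on isomorphism classes, as recorded after Definition \ref{def:WreathProduct}. Granting this, it suffices to exhibit for $(X,S)$ a \emph{single} decomposition whose factors are each either an admissible atom ($\overline{\mathbf{2}}^1$ or a simple group divisor of $S$) or a transformation semigroup whose acting semigroup has order strictly smaller than $|S|$; the induction hypothesis then disposes of the latter factors and transitivity assembles everything into one wreath product of atoms. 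The class of atoms is closed under the recursion because a simple group divisor of a subsemigroup of $S$ is a simple group divisor of $S$.

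Before the induction I would record two \emph{flip-flop absorption} lemmas that tame the decorations appearing in Proposition \ref{prop:Decompositions}, namely that adjoining an identity and adjoining the constant (reset) maps each cost only flip-flops: $(Z,T)^1\prec(Z,T)\wr(\overline{\mathbf{2}}^1)^{\wr m}$ and $\overline{(Z,T)}\prec(Z,T)\wr(\overline{\mathbf{2}}^1)^{\wr m}$ for suitable $m$. The point is that a wreath power of the flip-flop realizes both the identity-only transformation monoid on $2^m$ states and every constant map, while still carrying the $T$-action on a fiber. With these in hand, each application of Proposition \ref{prop:Decompositions} lets me recurse on the \emph{bare} pieces $(X,I)$, $(T^1,T)$, and $(X,S\setminus G)$, whose semigroups have order $<|S|$, rather than on the larger decorated semigroups; this is exactly what keeps the induction measure strictly decreasing.

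The inductive step splits according to the structural trichotomy of \cite{KR}: a finite semigroup $S$ with $|S|>1$ is either a union $S=I\cup T$ of a proper left ideal $I$ and a proper subsemigroup $T$, or left simple, or cyclic. In the first case I apply Proposition \ref{prop:Decompositions}(2) followed by the absorption lemmas to get $(X,S)\prec(X,I)\wr(\overline{\mathbf{2}}^1)^{\wr m}\wr(T^1,T)\wr(\overline{\mathbf{2}}^1)^{\wr m}$, then recurse on $(X,I)$ and $(T^1,T)$ since $|I|,|T|<|S|$. If $S$ is left simple it is a left group, so the complement $S\setminus G$ of a maximal subgroup $G$ is a subsemigroup; Proposition \ref{prop:Decompositions}(1) peels $G$ off, Theorem \ref{thm:JordanHolder} resolves $G$ into simple group divisors, and I recurse on the smaller $S\setminus G$. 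For cyclic $S=\langle s\rangle$ I separate its unique maximal subgroup, the kernel $\mathbb{Z}/r$ (whose simple cyclic divisors are supplied by Theorem \ref{thm:JordanHolder}), from the transient aperiodic part of the map $s$.

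The hard part will be the two genuinely irreducible base cases, which are precisely the aperiodic instances of the left-simple and cyclic cases, together with the trichotomy itself. For the base cases I must show that \emph{reset} semigroups (left-zero transformation semigroups, where $st=s$, so the image of each map is pointwise fixed by the whole semigroup) and aperiodic monogenic maps (those whose functional graph has no nontrivial cycle) divide wreath powers of $\overline{\mathbf{2}}^1$; the argument is the classical one of inducting on the height of the transient structure and spending one flip-flop to reset each level, and this is where the flip-flop atoms are actually produced. The trichotomy requires choosing a maximal proper left ideal and verifying that its complement packages as a proper subsemigroup unless $S$ degenerates to the left-simple or cyclic case, which is the combinatorial crux of \cite{KR}. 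Finally I would verify the bookkeeping that $|S|$ strictly drops in every branch; the delicate points are the identity-adjunction and closure steps, which is exactly what the absorption lemmas are designed to absorb.
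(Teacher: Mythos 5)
Your skeleton --- strong induction on $|S|$ through the trichotomy of \cite{KR}, Proposition \ref{prop:Decompositions}, Theorem \ref{thm:JordanHolder}, and explicit base cases for resets and monogenic maps --- is the classical argument that the paper only cites rather than proves. The genuine gap is that the two ``flip-flop absorption'' lemmas carrying your entire bookkeeping are false as stated, and the closure one fails irreparably. With the paper's convention $(X,S)\wr(Y,T)=(X\times Y,S^Y\rtimes T)$, $(x,y)(f,t)=(x(yf),yt)$, the outer factor moves only the $Y$-coordinate and merely selects which elements of $S$ act on the $X$-coordinate; it can never reset the $X$-coordinate unless $S$ itself contains the needed constants. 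Concretely, take $(Z,T)=(\mathbf{2},\mathbb{Z}/2)$ with $\sigma$ the transposition. Then $\overline{(Z,T)}\not\prec(Z,T)\wr(Y,U)$ for \emph{any} $(Y,U)$ with $U$ aperiodic (meaning $u^N=u^{N+1}$ for every $u$ and large $N$), as every wreath power of $\overline{\mathbf{2}}^1$ is. Indeed, suppose $\varphi\colon Z\times Y\to Z$ is a covering, let $(f_s,u_s)$ cover $s\in\{\sigma,\bar 0\}$, and write $C_y=\varphi(\cdot,y)$; the covering condition reads $C_{yu_s}\bigl(z(yf_s)\bigr)=C_y(z)s$. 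If every $C_y$ is constant on its domain, then $y\mapsto\text{value of }C_y$ is a covering exhibiting $\overline{(Z,T)}\prec(Y,U)$, so the group $\mathbb{Z}/2$ divides the aperiodic $U$, which is impossible. Otherwise some $C_y$ is a bijection of $Z$; since $yf_{\bar 0}\in T$ permutes $Z$, the condition for $\bar 0$ forces $C_{yu_{\bar 0}}$ to be totally defined and constantly $0$; since each $y'f_\sigma$ permutes $Z$, the condition for $\sigma$ carries any totally defined constant column of value $c$ to one of value $c\sigma$; iterating, the column at $yu_{\bar 0}u_\sigma^k$ is constantly $0\sigma^k$, and $u_\sigma^N=u_\sigma^{N+1}$ forces $0=1$. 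This failure hits exactly where you need absorption most: in Proposition \ref{prop:Decompositions}(2) the closed factor is $\overline{(T\cup 1_X,T)}$ with $T$ an arbitrary proper subsemigroup, which typically contains nontrivial groups.

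The classical proof avoids absorbing decorations after the fact by strengthening the induction hypothesis instead: one proves, by induction on $|S|$, that $\overline{(X,S)}^1$ (not $(X,S)$) divides a wreath product of flip-flops and simple group divisors of $S$. This closes the recursion because the decorations distribute over wreath products essentially for free --- $\overline{A\wr B}^{\,1}\prec\overline{A}^{\,1}\wr\overline{B}^{\,1}$, the right-hand semigroup containing the left-hand one acting on the same set --- and division is monotone under closure and unit adjunction, so Proposition \ref{prop:Decompositions}(2) yields $\overline{(X,S)}^1\prec\overline{(X,I)}^{\,1}\wr\overline{(T^1,T)}^{\,1}$, to which the strengthened hypothesis applies directly with the genuinely smaller semigroups $I$ and $T$. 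An absorption lemma is then needed only at the leaves, for the group atoms, and there it is true but with the factors in the \emph{opposite} order to yours: $\overline{(G,G)}\prec\bar{G}^1\wr(G,G)$ via $\varphi(z,x)=zx$, covering $\bar{g}_0$ by $(f,1_G)$ with $xf=\overline{g_0x^{-1}}$ and covering $s\in G$ by the identity-valued function paired with $s$; then $\bar{G}^1\prec(\overline{\mathbf{2}}^1)^{\wr m}$ is your (correct) reset base case. Be aware that no absorption of this shape holds for arbitrary $(Z,T)$ even in the corrected order (the construction above genuinely uses the regular representation of a group), so the strengthened-hypothesis route is not cosmetic; it is what makes the induction go through. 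With that restructuring, your trichotomy, base cases, and use of Theorem \ref{thm:JordanHolder} are sound.
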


In $\mathbf{FSgp}$, we say $S$ is \emph{prime} if $S \prec T \rtimes U$ implies that either $S \prec T$ or $S \prec U$. The prime semigroups are precisely the divisors of $\overline{\mathbf{2}}{^1}$ and the finite simple groups. The decomposition of Theorem \ref{thm:KrohnRhodes} is called the \emph{prime decomposition}.

\subsection{Global Structure of Transition Semigroups}

Let $X$ and $Y$ be finite sets. If $\varphi: Y \to X$ is a partial map, we define its extension to be a partial map $\mathbb{P}\varphi: \mathbb{P}Y \to \mathbb{P}X$ given by
	\begin{equation*}
		\pi(\mathbb{P}\varphi) =
		\begin{cases}
			\displaystyle \sum_{x \in X}\sum_{y\varphi = x} \pi(y)x & \text{if } y\varphi \neq \emptyset \text{ whenever } \pi(y) > 0, \\
			\emptyset & \text{otherwise}
		\end{cases}
	\end{equation*}
for any $\pi \in \mathbb{P}Y$.

\begin{definition}
Let $(X, S, \mathbb{P})$ and $(Y, T, \mathbb{P})$ be transition semigroups. If there exists a surjective partial map $\varphi: Y \to X$ with extension $\mathbb{P}\varphi: \mathbb{P}Y \to \mathbb{P}X$ such that for every $\mu \in \mathbb{P}S$, $(\mathbb{P}\varphi)\mu = \nu(\mathbb{P}\varphi)$ for some $\nu \in \mathbb{P}T$, so that the diagram
	\begin{center}
		\begin{tikzpicture}
			\node (0) {$\mathbb{P}Y$};
			\node (1) [below of=0] {$\mathbb{P}X$};
			\node (2) [right of=1] {$\mathbb{P}X$};
			\node (3) [above of=2] {$\mathbb{P}Y$};
			\draw[->] (0) to node [swap] {$\mathbb{P}\varphi$} (1);
			\draw[->] (1) to node [swap] {$\mu$} (2);
			\draw[->] (3) to node {$\mathbb{P}\varphi$} (2);
			\draw[->] (0) to node {$\nu$} (3);
		\end{tikzpicture}
	\end{center}
commutes, then $(X, S, \mathbb{P})$ is said to divide $(Y, T, \mathbb{P})$ by $\mathbb{P}\varphi$. We write
	\begin{equation*}
		(X, S, \mathbb{P}) \prec (Y, T, \mathbb{P})
	\end{equation*}
to mean $(X, S, \mathbb{P})$ is a divisor of $(Y, T, \mathbb{P})$, and refer to $\varphi$ as a covering.
\end{definition}

Notation for transformation semigroups naturally carry over to transition semigroups. Therefore
	\begin{equation*}
		\overline{(X, S, \mathbb{P})} = (X, S \cup \bar{X}, \mathbb{P}) \text{ and } (X, S, \mathbb{P})^1 = (X, S \cup 1_X, \mathbb{P}).
	\end{equation*}
We also identify $(X, \mathbb{P})$ with $(X, \emptyset, \mathbb{P})$ and $(S, \mathbb{P})$ with $(S^1, S, \mathbb{P})$.

\begin{lemma} \label{lem:Duality}
If $(X, S, \mathbb{P})$ and $(Y, T, \mathbb{P})$ are transition semigroups, then $(X, S, \mathbb{P})$ divides $(Y, T, \mathbb{P})$ if and only if $(X, S)$ divides $(Y, T)$.
\end{lemma}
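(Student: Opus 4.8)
The plan is to observe that both notions of division are witnessed by one and the same surjective partial map $\varphi\colon Y \to X$, and to show that a fixed such $\varphi$ covers $(X,S,\mathbb{P}) \prec (Y,T,\mathbb{P})$ exactly when it covers $(X,S) \prec (Y,T)$. The bridge I would use is that, under the identification of $x \in X$ with the point mass $x \in \mathbb{P}X$, the extension $\mathbb{P}\varphi$ sends the point mass $y$ to the point mass $(y)\varphi$ when the latter is defined, while the action of a point mass $s \in \mathbb{P}S$ sends $x$ to $xs$. Thus on point masses the probabilistic square degenerates to the deterministic one. I would also record that, for a fixed $\mu$, the two composites $\pi \mapsto \pi(\mathbb{P}\varphi)\mu$ and $\pi \mapsto \pi\nu(\mathbb{P}\varphi)$ are restrictions to $\mathbb{P}Y$ of $\mathbb{R}$-linear maps on the free module on $Y$, hence each is determined by its values on the point masses $y$, $y \in Y$.

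For the implication $(X,S)\prec(Y,T) \Rightarrow (X,S,\mathbb{P})\prec(Y,T,\mathbb{P})$, I would suppose $\varphi$ covers $(X,S)$ and, for each $s \in S$, fix $t_s \in T$ with $\varphi s = t_s\varphi$ as partial maps. Given $\mu \in \mathbb{P}S$, I would push it forward to $\nu = \sum_{s\in S}\mu(s)t_s \in \mathbb{P}T$, that is $\nu(t) = \sum_{t_s = t}\mu(s)$, which is manifestly a probability distribution. Evaluating both composites on a point mass $y$ gives
	\begin{equation*}
		y(\mathbb{P}\varphi)\mu = \sum_{s\in S}\mu(s)\bigl(((y)\varphi)s\bigr)
		\quad\text{and}\quad
		y\nu(\mathbb{P}\varphi) = \sum_{s\in S}\mu(s)\bigl(((y)t_s)\varphi\bigr),
	\end{equation*}
and these agree term by term since $((y)\varphi)s = ((y)t_s)\varphi$ for every $s$ by the choice of $t_s$. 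As the two composites are linear and agree on every point mass, they agree on all of $\mathbb{P}Y$, so $(\mathbb{P}\varphi)\mu = \nu(\mathbb{P}\varphi)$ and $\varphi$ covers $(X,S,\mathbb{P})$.

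For the converse I would assume $\varphi$ covers $(X,S,\mathbb{P})$, fix $s \in S$, and apply the hypothesis to the point mass $\mu = s \in \mathbb{P}S$, obtaining $\nu \in \mathbb{P}T$ with $(\mathbb{P}\varphi)s = \nu(\mathbb{P}\varphi)$. Evaluating on a point mass $y$ yields
	\begin{equation*}
		((y)\varphi)s = \sum_{t\in T}\nu(t)\bigl(((y)t)\varphi\bigr).
	\end{equation*}
The left-hand side is a single point mass and the right-hand side a convex combination of point masses, and an equality of this shape forces $((y)t)\varphi = ((y)\varphi)s$ for every $t$ in the (nonempty) support of $\nu$. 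Letting $y$ range over the domain of $\varphi$, each such $t$ satisfies $t\varphi = \varphi s$ as partial maps, so choosing any $t^{*}$ in the support of $\nu$ exhibits $\varphi s = t^{*}\varphi$ and shows $\varphi$ covers $(X,S)$.

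The step I expect to require the most care is the bookkeeping around partiality: I must check that on each side of the probabilistic square $\mathbb{P}\varphi$ is undefined on exactly the same distributions, which reduces to the equality of the domains of $\varphi s$ and $t_s\varphi$ (respectively $t^{*}\varphi$) guaranteed by the deterministic covering relation. Everything else is the convexity observation that a point mass equals a convex combination of point masses only when that combination is concentrated, together with the linearity of the $\mathbb{P}$-extensions.
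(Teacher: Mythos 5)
Your proposal is correct and follows essentially the same route as the paper: the forward implication is obtained by applying the probabilistic covering to a point mass $s$ and using the convexity argument (a point mass equals a convex combination only if the combination is concentrated on its value), and the converse by selecting $t_s$ with $\varphi s = t_s\varphi$, pushing $\mu$ forward to $\nu$, and verifying the intertwining identity, which you do via linearity on point masses and the paper does by direct computation with a general $\pi$. Your write-up even makes explicit two points the paper leaves implicit, namely the concentration argument and the bookkeeping of domains of the partial maps.
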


\begin{proof}
Suppose $(X, S, \mathbb{P})$ divides $(Y, T, \mathbb{P})$ by $\mathbb{P}\varphi$. Fix $s \in S$. Then $(\mathbb{P}\varphi)s = \nu(\mathbb{P}\varphi)$ for some $\nu \in \mathbb{P}Y$. This means
	\begin{equation*}
		y\varphi{s} = \sum_{t \in T} \nu(t)yt\varphi
	\end{equation*}
for any $y \in Y$ such that $y\varphi \neq \emptyset$. We conclude $\varphi{s} = t\varphi$ for some $t \in T$ with $\nu(t) > 0$.

Conversely, assume $(X, S)$ divides $(Y, T)$ by $\varphi$. Given $\mu \in \mathbb{P}S$, choose $t \in T$ such that $\varphi{s} = t\varphi$ for every $s \in S$ with $\mu(s) > 0$. Let $U \subset T$ be the collection of all such selections. Define $\nu \in \mathbb{P}T$ by
	\begin{equation*}
		\nu(t) =
		\begin{cases}
			\displaystyle \sum_{\varphi{s} = t\varphi} \mu(s) & \text{if } t \in U, \\
			0 & \text{otherwise}.
		\end{cases}
	\end{equation*}
Then we can write
	\begin{equation*}
		\pi(\mathbb{P}\varphi)\mu = \sum_{x \in X}\sum_{y\varphi{s} = x} \pi(y)\mu(s)x = \sum_{x \in X}\sum_{yt\varphi = x} \pi(y)\nu(t)x = \pi\nu(\mathbb{P}\varphi),
	\end{equation*}
where $\pi \in \mathbb{P}Y$.
\end{proof}

To extend Definition \ref{def:WreathProduct} to transition semigroups, we take the wreath product of $(X, S)$ by $(Y, T)$, and consider the right action of $\mathbb{P}(S^Y \rtimes T)$ on $\mathbb{P}(X \times Y)$.

\begin{definition}
Let $(X, S, \mathbb{P})$ and $(Y, T, \mathbb{P})$ be transition semigroups. The wreath product of $(X, S, \mathbb{P})$ by $(Y, T, \mathbb{P})$ is the transition semigroup
	\begin{equation*}
		(X, S, \mathbb{P}) \wr (Y, T, \mathbb{P}) = (Z, U, \mathbb{P}),
	\end{equation*}
where $(Z, U) = (X, S) \wr (Y, T)$.
\end{definition}

It is clear that $(Z, U, \mathbb{P})$ is well-defined since $(X, S) \wr (Y, T)$ is a transformation semigroup in its own right.

\begin{theorem} \label{thm:Prime}
If $(X, S, \mathbb{P})$ is a transition semigroup, then
	\begin{equation*}
		(X, S, \mathbb{P}) \prec (X_1, S_1, \mathbb{P}) \wr \cdots \wr (X_n, S_n, \mathbb{P}),
	\end{equation*}
where either $(X_i, S_i) = \overline{\mathbf{2}}{^1}$ or $(X_i, S_i)$ is a simple group divisor of $S$ for $1 \leq i \leq n$.
\end{theorem}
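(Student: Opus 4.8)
The plan is to reduce the statement entirely to the classical Krohn-Rhodes decomposition of the underlying transformation semigroup and then transport that decomposition to the probabilistic setting by way of the duality in Lemma \ref{lem:Duality}. The guiding observation is that a transition semigroup $(X, S, \mathbb{P})$ carries no data beyond its underlying transformation semigroup $(X, S)$: the action of $\mathbb{P}S$ on $\mathbb{P}X$ is canonically determined by the action of $S$ on $X$. Lemma \ref{lem:Duality} makes this precise by asserting that division of transition semigroups is equivalent to division of the underlying transformation semigroups, so the probabilistic structure is, in effect, along for the ride.

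First I would apply Theorem \ref{thm:KrohnRhodes} to the transformation semigroup $(X, S)$ to obtain a decomposition
\[
	(X, S) \prec (X_1, S_1) \wr \cdots \wr (X_n, S_n),
\]
in which each factor $(X_i, S_i)$ is either $\overline{\mathbf{2}}{^1}$ or a simple group divisor of $S$. Next I would observe that, by the definition of the wreath product of transition semigroups, the transformation semigroup underlying
\[
	(X_1, S_1, \mathbb{P}) \wr \cdots \wr (X_n, S_n, \mathbb{P})
\]
is precisely $(X_1, S_1) \wr \cdots \wr (X_n, S_n)$; associativity of $\wr$ renders the iterated product unambiguous.

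With both sides now exhibited as transition semigroups over the appropriate underlying transformation semigroups, I would invoke Lemma \ref{lem:Duality}. Since $(X, S)$ divides the iterated wreath product $(X_1, S_1) \wr \cdots \wr (X_n, S_n)$, the lemma immediately yields
\[
	(X, S, \mathbb{P}) \prec (X_1, S_1, \mathbb{P}) \wr \cdots \wr (X_n, S_n, \mathbb{P}),
\]
which is exactly the asserted prime decomposition.

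I do not expect a genuine obstacle, since the substantive content has already been isolated in the duality lemma. The one point demanding explicit care is the compatibility of the wreath product construction with iteration, namely that the transformation semigroup underlying an $n$-fold transition wreath product coincides with the corresponding $n$-fold transformation wreath product. This is immediate from the definition of the transition wreath product, but it must be recorded so that Lemma \ref{lem:Duality} can legitimately be applied to the full $n$-fold product rather than only to a single factor.
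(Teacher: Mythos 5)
Your proposal is correct and follows exactly the paper's own argument: the paper proves Theorem \ref{thm:Prime} as an immediate consequence of Theorem \ref{thm:KrohnRhodes} applied to $(X, S)$ together with Lemma \ref{lem:Duality}. Your additional remark about the underlying transformation semigroup of the iterated transition wreath product is a worthwhile explicit check, but it is the same route the paper takes.
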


\begin{proof}
This is an immediate consequence of Theorem \ref{thm:KrohnRhodes} and Lemma \ref{lem:Duality}.
\end{proof}

We define a transition semigroup $(X, S, \mathbb{P})$ to be prime if $(X, S)$ is prime as a transformation semigroup. Theorem \ref{thm:Prime} provides a way to classify any set of stochastic matrices. If $T$ is any semigroup of $\mathrm{S}(n, \mathbb{R})$, then $S = \mathrm{supp}(T)$ is a set of row monomial binary matrices isomorphic to a subsemigroup of $F_n$. Set $\mathbf{n} = X$. Then each matrix in $T$ is an instance in $(X, S, \mathbb{P})$.

\section{Representation Theory of Probabilistic Autom\-ata} \label{sec:5}

\subsection{Munn-Ponizovski\u{\i} Theory}

Let $A$ be an associative algebra with unity. We denote by $\mathbf{Mod}\text{-}A$ the category of right $A$-modules. Put $J = \mathrm{Rad}(A)$. For any primitive idempotent $e$ of $A$, $eJ$ is the unique maximal submodule of $eA$ in $\mathbf{Mod}\text{-}A$. Assume further that $A$ is noetherian or artinian. This ensures that there exists a collection of pairwise orthogonal central idempotents $e_1, \cdots, e_n \in A$ such that $1_A = e_1 + \cdots + e_n$, or equivalently,
	\begin{equation*}
		A_A = e_1A \oplus \cdots \oplus e_nA.
	\end{equation*}
Moreover, $M \in \mathbf{Mod}\text{-}A$ is simple if and only if $M \cong e_iA/e_iJ$ for some $1 \leq i \leq n$, and hence there is a one-to-one correspondence between isomorphism classes of irreducible modules and that of principal indecomposable modules.

For any idempotent $e$ of $A$, set $B = eAe$. Then $B$ is a subalgebra of $A$. We define restriction as the covariant functor $\mathrm{Res}_B^A: \mathbf{Mod}\text{-}A \to \mathbf{Mod}\text{-}B$ given by
	\begin{equation*}
		\mathrm{Res}_B^A(M) = Me
	\end{equation*}
and induction as its left adjoint functor $\mathrm{Ind}_B^A: \mathbf{Mod}\text{-}B \to \mathbf{Mod}\text{-}A$ given by
	\begin{equation*}
		\mathrm{Ind}_B^A(M) = M \otimes_B eA.
	\end{equation*}
Then $\mathrm{Res}_B^A$ is exact and $\mathrm{Ind}_B^A$ is left exact.

\begin{theorem}[Green] \label{thm:Green}
Let $e \neq 0$ be an idempotent of an associative algebra $A$.
	\begin{compactenum}
		\item If $M \in \mathbf{Mod}\text{-}A$ is simple, then $\mathrm{Res}_{eAe}^A(M) \in \mathbf{Mod}\text{-}eAe$ is either trivial or simple.
		\item If $N \in \mathbf{Mod}\text{-}eAe$ is simple, then the quotient of $\mathrm{Ind}_{eAe}^A(N)$ by its unique maximal submodule
			\begin{equation*}
				\left\{m \in \mathrm{Ind}_{eAe}^A(N) \mid mAe = 0\right\}
			\end{equation*}
		is the unique simple $M \in \mathbf{Mod}\text{-}A$ such that $\mathrm{Res}_{eAe}^A(M) = N$.
	\end{compactenum}
Consequently, there is a one-to-one correspondence between simple $A$-modules that are not annihilated by $e$ and simple $B$-modules.
\end{theorem}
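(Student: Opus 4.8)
The plan is to prove Green's theorem on the correspondence between simple modules over $A$ and over $eAe$ by analyzing how restriction and induction interact. First I would establish part (1). Let $M$ be a simple right $A$-module and suppose $Me \neq 0$. I would show $Me$ is simple as a $B$-module, where $B = eAe$. The key observation is that $Me$ is a $B$-module because $(Me)e = Me$ and $Me \cdot eAe \subseteq Me$. To see simplicity, take any nonzero $m e \in Me$; since $M$ is simple, $meA = M$, so $meAe = Me$, which shows that the cyclic $B$-submodule generated by $me$ is all of $Me$. Hence $Me$ is simple, completing (1).

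For part (2), I would start with a simple $B$-module $N$ and consider $\mathrm{Ind}_B^A(N) = N \otimes_B eA$. The plan is to verify that the proposed submodule $K = \{m \in N \otimes_B eA \mid mAe = 0\}$ is indeed an $A$-submodule, which is routine since $Ae \cdot A \subseteq A$ ensures $K$ is closed under the $A$-action. The main work is to show $K$ is the unique maximal submodule and that the quotient restricts back to $N$. I would compute $\mathrm{Res}_B^A(N \otimes_B eA) = (N \otimes_B eA)e = N \otimes_B eAe = N \otimes_B B \cong N$, using that $eA \cdot e = eAe = B$. Since restriction is exact and $K e = 0$ (as elements of $K$ are annihilated by $Ae \supseteq eAe$), the restriction of the quotient is also $N$. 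Then for any proper submodule $M'$ of the induced module, $M'e$ must be a proper $B$-submodule of $N$, hence zero by simplicity of $N$, which forces $M' \subseteq K$; this simultaneously proves maximality and uniqueness of $K$.

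To finish, I would assemble the bijection. Part (1) sends a simple $A$-module $M$ with $Me \neq 0$ to the simple $B$-module $Me$. Part (2) sends a simple $B$-module $N$ to the simple $A$-module $L = \mathrm{Ind}_B^A(N)/K$, which by construction satisfies $\mathrm{Res}_B^A(L) = N$. I would then check these two assignments are mutually inverse: starting from a simple $A$-module $M$ not annihilated by $e$, the module $\mathrm{Ind}_B^A(Me)/K$ must equal $M$ by the uniqueness clause established in (2), since $M$ is a simple $A$-module restricting to $Me$. The reverse composite is immediate from $\mathrm{Res}_B^A(L) = N$.

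The hard part will be the uniqueness argument in part (2), specifically showing that the unique maximal submodule is exactly $K = \{m \mid mAe = 0\}$ rather than merely some maximal submodule, and that any simple $A$-module restricting to $N$ must arise this way. The delicate point is that $\mathrm{Ind}_B^A$ is only left exact, so I cannot freely dualize arguments; I must argue directly via the universal property of the tensor product and the adjunction $\mathrm{Hom}_A(N \otimes_B eA, M) \cong \mathrm{Hom}_B(N, Me)$. Invoking this adjunction, a nonzero $B$-map $N \to Me$ (which exists when $\mathrm{Res}_B^A(M) = N$) corresponds to a nonzero $A$-map $\mathrm{Ind}_B^A(N) \to M$, whose image is all of $M$ by simplicity and whose kernel, being the unique maximal submodule, must be $K$. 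This adjunction-based identification is what ties the two parts together into a genuine bijection.
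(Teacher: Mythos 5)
Your proof is correct in substance, but note that the paper itself does not prove this theorem: it is quoted as a classical result of Green \cite{G2}, following the treatment in Ganyushkin--Mazorchuk--Steinberg \cite{GMS}, and is used as a black box to derive Theorem \ref{thm:MunnPonizovskii}. So there is no proof in the paper to compare against; what you have written is the standard argument, and it goes through. Part (1) is exactly right: for $0 \neq me \in Me$, unitality gives $meA = M$, hence $me \cdot eAe = meAe = Me$, so $Me$ is simple whenever it is nonzero. In part (2) your chain of reductions --- $K$ is an $A$-submodule, $Ke = 0$ while $(N \otimes_B eA)e \cong N \neq 0$ so $K$ is proper, every proper submodule $M'$ satisfies $M'e = 0$ and hence $M'Ae \subseteq M'e = 0$, forcing $M' \subseteq K$ --- is the right skeleton, and the adjunction $\mathrm{Hom}_A(N \otimes_B eA, M) \cong \mathrm{Hom}_B(N, Me)$ correctly pins down uniqueness: the kernel of the resulting surjection onto $M$ is a maximal submodule, hence equals $K$.

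Two points to tighten. First, your assertion that $M'e$ is a \emph{proper} $B$-submodule of $N$ whenever $M'$ is proper is not automatic; it needs the observation that $N \otimes_B eA$ is generated as an $A$-module by $(N \otimes_B eA)e = N \otimes e$ (every element is a finite sum $\sum n_i \otimes ea_i = \sum (n_i \otimes e)a_i$, using that $A$ is unital), so that $M'e = (N \otimes_B eA)e$ would force $M' \supseteq (N \otimes e)A = N \otimes_B eA$. Without this one-line generation argument, the claim that properness of $M'$ implies properness of $M'e$ is unsupported. Second, your remark that $\mathrm{Ind}_B^A$ is ``only left exact'' has the handedness backwards: a tensor functor, being a left adjoint, is right exact. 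Nothing in your argument actually uses this claim, so the slip is harmless --- though, curiously, the paper makes the same misstatement when it introduces the induction functor.
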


Around the same time, Munn \cite{M2} \& Ponizovski\u{\i} \cite{P} independently furthered the work of Clifford \cite{C} by characterizing irreducible representations of a finite semigroup by those of its principal factors. Lallement \& Petrich \cite{LP}, and later Rhodes \& Zalcstein \cite{RZ}, provided a precise construction based on Theorem \ref{thm:Rees}. We closely follow the arguments of Ganyushkin, Mazorchuk \& Steinberg \cite{GMS} in which the same results are recovered by virtue of Theorem \ref{thm:Green}.

Let $S$ be a finite semigroup. For a field $K$, $KS$ is artinian, so that the notions of semisimplicity and semiprimitivity coincide. It is evident that $KS$ need not be semisimple. Consider, for instance, $K\bar{X}$ for any finite set $X$. For $M \in \mathbf{Mod}\text{-}KS$, we denote by $\mathrm{Ann}_S(M)$ the ideal of $S$ consisting of elements that annihilate $M$.

\begin{definition}
Let $M \in \mathbf{Mod}\text{-}KS$, where $K$ is a field and $S$ a finite semigroup. If $e$ is an idempotent of $S$ satisfying
	\begin{equation*}
		\mathrm{Ann}_S(M) = \{s \in S \mid J_e \subset J(s)\},
	\end{equation*}
then $J_e$ is said to be the apex of $M$.
\end{definition}

Suppose $M \in \mathbf{Mod}\text{-}KS$ is simple. Then there exists a unique apex $J_e$ of $M$. Set $I = \mathrm{Ann}_S(M)$. We identify $M$ with the unique simple $N \in \mathbf{Mod}\text{-}KS/KI$ such that $Ne \neq 0$. By Proposition \ref{prop:Identities},
	\begin{equation*}
		e(KS/KI)e \cong K(eSe)/K(eIe) \cong KH_e.
	\end{equation*}
Let $E(S)$ be a collection of idempotent class representatives of regular $\mathfrak{j}$-classes of $S$. We also write $\mathrm{Res}_{H_e}^S(M)$ and $\mathrm{Ind}_{H_e}^S(M)$, respectively, to mean the restriction and induction functors.

\begin{theorem}[Munn-Ponizovski\u{\i}] \label{thm:MunnPonizovskii}
Let $K$ be a field. Suppose $e \in E(S)$, where $S$ is a finite semigroup.
	\begin{compactenum}
		\item If $M \in \mathbf{Mod}\text{-}KS$ is simple with apex $J_e$, then $\mathrm{Res}_{H_e}^S(M) \in \mathbf{Mod}\text{-}KH_e$ is simple.
		\item If $N \in \mathbf{Mod}\text{-}KH_e$ is simple, then the quotient of $\mathrm{Ind}_{H_e}^S(N)$ by its unique maximal submodule
			\begin{equation*}
				\left\{m \in \mathrm{Ind}_{H_e}^S(N) \mid mKSe = 0\right\}
			\end{equation*}
		is the unique simple $M \in \mathbf{Mod}\text{-}KS$ with apex $J_e$ such that $\mathrm{Res}_{H_e}^S(M) = N$.
	\end{compactenum}
Consequently, there is a one-to-one correspondence between irreducible representations of $S$ and those of $H_e$ for $e \in E(S)$.
\end{theorem}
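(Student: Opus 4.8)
The plan is to deduce both statements from Green's theorem (Theorem \ref{thm:Green}) applied to a suitable quotient of the semigroup algebra, using the identification $e(KS/KI)e \cong KH_e$ recorded just before the statement. In part (1) I would set $I = \mathrm{Ann}_S(M)$; in part (2), since there is no $M$ to begin with, I would take $I = \{s \in S \mid J_e \not\subseteq J(s)\}$, the largest ideal of $S$ disjoint from $J_e$, which is precisely the annihilator that forces the apex to be $J_e$. In either case write $A = KS/KI$ and let $\bar e$ be the image of $e$, a nonzero idempotent of $A$. The bridge between the two theorems is the isomorphism $\bar e A \bar e = \bar e(KS/KI)\bar e \cong KH_e$ furnished by Proposition \ref{prop:Identities}(3): under it the functors $\mathrm{Res}_{\bar e A\bar e}^A$ and $\mathrm{Ind}_{\bar e A\bar e}^A$ become $\mathrm{Res}_{H_e}^S$ and $\mathrm{Ind}_{H_e}^S$, so that Green's theorem for the abstract pair $(A,\bar e)$ transcribes directly into the desired statement for $(S, H_e)$.

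For part (1), I would first note that a simple $M$ with apex $J_e$ factors through $A = KS/KI$ and is not annihilated by $\bar e$: since $e \in J_e$ and $I \cap J_e = \emptyset$, we have $e \notin I$, so $Me = M\bar e \neq 0$. Viewing $M$ as a simple $A$-module, Theorem \ref{thm:Green}(1) applied to $\bar e$ gives that $\mathrm{Res}_{\bar e A\bar e}^A(M) = M\bar e$ is trivial or simple, and it cannot be trivial because $M\bar e \neq 0$. Transporting along $\bar e A\bar e \cong KH_e$ identifies $M\bar e$ with $\mathrm{Res}_{H_e}^S(M) = Me$, which is therefore simple over $KH_e$.

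For part (2), I would start from a simple $N \in \mathbf{Mod}\text{-}KH_e \cong \mathbf{Mod}\text{-}\bar e A\bar e$ and invoke Theorem \ref{thm:Green}(2) in $A$: the quotient of $\mathrm{Ind}_{\bar e A\bar e}^A(N) = N \otimes_{\bar e A\bar e}\bar e A$ by its maximal submodule $\{m \mid m A\bar e = 0\}$ is the unique simple $A$-module $\widetilde M$ with $\widetilde M\bar e = N$. Pulling $\widetilde M$ back along $KS \twoheadrightarrow A$ gives a simple $KS$-module $M$ with $Me = N$, and because $\widetilde M$ is annihilated exactly by $KI$ one checks $\mathrm{Ann}_S(M) = I$, so $M$ has apex $J_e$. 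The remaining task is to match this with the construction in the statement: the surjection $eKS \twoheadrightarrow \bar e A$ induces a surjection $\mathrm{Ind}_{H_e}^S(N) \twoheadrightarrow \mathrm{Ind}_{\bar e A\bar e}^A(N)$, and I would verify that its kernel already lies inside $\{m \in \mathrm{Ind}_{H_e}^S(N) \mid mKSe = 0\}$, while this latter submodule maps onto $\{m \mid mA\bar e = 0\}$, so that the two maximal quotients coincide and equal $M$.

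The main obstacle is exactly this comparison in part (2): Green's theorem is a statement about a single algebra carrying an idempotent, whereas the theorem as phrased induces from $KH_e$ all the way up to $KS$ rather than to the quotient $A$ in which $\bar e A\bar e$ is literally $KH_e$. I therefore expect the delicate point to be showing that inducing over $KS$ and then killing $\{m \mid mKSe = 0\}$ yields the same simple module as inducing over $A$ and killing $\{m \mid mA\bar e = 0\}$ — equivalently, that the lower $\mathfrak{j}$-classes collapsed in passing to $A$ contribute nothing beyond the maximal submodule. Once this is settled, the concluding bijection is formal: every simple $KS$-module has a unique apex $J_e$ with $e \in E(S)$ ranging over regular $\mathfrak{j}$-classes, and parts (1) and (2) are mutually inverse on the simples of a fixed apex, so assembling over $e \in E(S)$ produces the asserted one-to-one correspondence between irreducible representations of $S$ and those of the maximal subgroups $H_e$.
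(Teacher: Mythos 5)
Your core strategy coincides with what the paper intends: the paper gives no self-contained proof of this theorem, and everything it does offer is exactly the reduction you reproduce --- set $I = \mathrm{Ann}_S(M)$, identify $\bar e(KS/KI)\bar e \cong K(eSe)/K(eIe) \cong KH_e$ via Proposition \ref{prop:Identities}, and defer to Ganyushkin--Mazorchuk--Steinberg, who recover the statement by applying Theorem \ref{thm:Green} to the pair $(KS/KI, \bar e)$. Your part (1) is correct, as is the main construction in part (2), including the check that $\mathrm{Ann}_S(M) = I$ exactly (an annihilator is an ideal, so if it properly contained $I$ it would contain $e$, contradicting $Me \neq 0$); you also correctly read the apex condition as $\mathrm{Ann}_S(M) = \{s \in S \mid J_e \not\subset J(s)\}$, which silently repairs an inverted inclusion in the paper's displayed definition.

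However, the step you single out as the ``main obstacle'' in part (2) is not a gap to be filled later: the claim you propose to verify is false, and the difficulty dissolves once the notation is read as the paper intends. You want to show that the kernel of $N \otimes_{KH_e} eKS \twoheadrightarrow N \otimes_{KH_e} \bar e A$ lies inside $\{m \mid mKSe = 0\}$, so that inducing over $KS$ and inducing over $A = KS/KI$ have the same simple quotient. Take $S = \{e, z\}$ the two-element semilattice with $ez = ze = z^2 = z$, so $I = \{z\}$, $H_e = \{e\}$, and let $N = K$ be the trivial module. Then $N \otimes_{KH_e} eKS = K(1 \otimes e) \oplus K(1 \otimes z)$ is two-dimensional and decomposes as the direct sum of the two non-isomorphic simple $KS$-modules, namely $K(1 \otimes e - 1 \otimes z)$ (where $z$ acts by $0$) and $K(1 \otimes z)$ (where $z$ acts by $1$); since $(1 \otimes z)se = 1 \otimes z \neq 0$, the submodule $\{m \mid mKSe = 0\}$ is zero, the quotient is not simple, and the kernel $K(1 \otimes z)$ of the surjection is not contained in it --- the $\mathfrak{j}$-classes collapsed in $A$ do contribute beyond the proposed maximal submodule. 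The resolution is that $\mathrm{Ind}_{H_e}^S$ in the statement is, by the paper's convention, the induction functor of Theorem \ref{thm:Green} for the pair $(KS/KI, \bar e)$ transported along $\bar e A \bar e \cong KH_e$; this is precisely what the remark following the theorem records, since $\bar e A \cong KR_e$ gives $\mathrm{Ind}_{H_e}^S(N) \cong N \otimes_{KH_e} KR_e$ (note $KR_e$ is in general much smaller than $eKS$). With that reading there is nothing to bridge: the condition $mKSe = 0$ coincides with $mA\bar e = 0$ because the $KS$-action factors through $A$, and your argument, with the comparison step deleted, is complete and identical to the paper's.
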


Again, by Proposition \ref{prop:Identities}, we know $e(KS/KI) \cong R_e$, from which it follows that
	\begin{equation*}
		\mathrm{Ind}_{H_e}^S(N) \cong N \otimes_{KH_e}KR_e
	\end{equation*}
for any $N \in \mathbf{Mod}\text{-}KH_e$, where $e \in E(S)$.

Sch\"utzenberger \cite{S1, S2} studied the action of $S$ on $L_s$ and $R_s$ for any $s \in S$. First define $\Lambda(H_s)$ to be the quotient of the right action of the monoid
	\begin{equation*}
		\{u \in S^1 \mid uH_s \subset H_s\}
	\end{equation*}
on $H_s$ by its kernel. Then $\Lambda(H_s)$ is isomorphic to the group of all maps of the form $\lambda_u|_{H_s}: H_s \to H_s$, and acts freely on $R_s$ from the left. We call $\Lambda(H_s)$ the \emph{left Sch\"utzenberger group} of $H_s$. Its orbit space $\Lambda(H_s) \backslash R_s$ consists of $\mathfrak{h}$-classes in $R_s$. Moreover, $\Lambda(H_s) \cong \Lambda(H_t)$ if $s \sim_\mathfrak{l} t$. A dual statement holds for the \emph{right Sch\"utzenberger group} $\Gamma(H_s)$. In particular, $\Lambda(H_s) \cong \Gamma(H_s)^\mathrm{op}$.

Suppose $\Lambda(H_s) \backslash R_s$ consists of $n$ number of $\mathfrak{h}$-classes. Choose a class representative for each $\mathfrak{h}$-class, so that we can write
	\begin{equation*}
		\Lambda(H_s) \backslash R_s = \{H_{s_1}, \cdots, H_{s_n}\}.
	\end{equation*}
Let $1 \leq i \leq n$. Given $t \in S$, if $s_it \in R_s$, then $s_it \in H_{s_j}$ for some $1 \leq j \leq n$, and so there exists $h \in \Lambda(H_s)$ such that $s_it = hs_j$. The \emph{right Sch\"utzenberger representation} is a map $\rho: S \to \mathrm{M}_n(\Lambda(H_s))$ defined by
	\begin{equation*}
		\rho(t)_{ij} =
		\begin{cases}
			h & \text{if } s_it = hs_j, \\
			0 & \text{otherwise}.
		\end{cases}
	\end{equation*}
The dual construction leads to the \emph{left Sch\"utzenberger representation} $\lambda: S \to M_n(\Gamma(H_s))$.

\subsection{Holonomy Decomposition}

The original proof of Theorem \ref{thm:KrohnRhodes} by Krohn \& Rhodes \cite{KR} is purely algebraic. Based on the work of Zeiger \cite{Z1, Z2}, Eilenberg \cite{E} devised a decomposition that retains the combinatorial structure of a transformation semigroup.

Let $(X, S)$ be a transformation semigroup. We can extend the action of $S$ on $X$ to $S^1$ by requiring that $x1 = x$ for any $x \in X$. Set
	\begin{equation*}
		XS = \{Xs \mid s \in S^1 \cup \bar{X}\} \cup \{\emptyset\}.
	\end{equation*}
Write $a \leq b$ if $a \subset bs$ for some $s \in S^1$. Then the quasiorder $\leq$ induces an equivalence relation $\sim$ given by $a \sim b$ if and only if $a \leq b$ and $b \leq a$. We write $a< b$ to mean $a \leq b$ and not $b \leq a$. A height function is a map $\eta: XS \to \mathbb{Z}$ satisfying
	\begin{compactenum}
		\item $\eta(\emptyset) = -1$,
		\item $\eta(x) = 0$ if $x \in X$,
		\item $a \sim b$ implies $\eta(a) = \eta(b)$,
		\item $a < b$ implies $\eta(a) < \eta(b)$,
		\item $\eta(a) = i$ for some $a \in XS$ if $0 \leq i \leq \eta(X)$.
	\end{compactenum}
The height of $(X, S)$, denoted $\eta(X, S)$, is defined as $\eta(X)$. We can always define a height function on $XS$ by assigning $\eta(a) = i$, where $a_0 < \cdots <a_i$ is a maximal chain in $XS$ such that $a_0 \in X$ and $a_i = a$.

Assume $|a| > 1$ for $a \in XS$. Consider the set $X_a$ of all maximal proper subsets of $a$ contained in $XS$. We call an element of $X_a$ a  \emph{brick} of $a$.  If $as = a$, then $X_as = X_a$, so that $s$ permutes $X_a$. Let $G_a$ denote the coimage of
	\begin{equation*}
		\{s \in S \mid as = s\} \to \mathrm{Sym}(X_a).
	\end{equation*}
Clearly, $G_a \prec S$. If $G_a \neq \emptyset$, $(X_a, G_a)$ is a transformation group. Furthermore, $a \sim b$ implies $(X_a, G_a) \cong (X_b, G_b)$. In case $G_a = \emptyset$, put $G_a = 1$.

Suppose $\eta$ admits $j$ elements, say $a_1, \cdots, a_j$, of height $k$ in ${XS}/{\sim}$. Then we call $X_k = X_{a_1} \times \cdots \times X_{a_j}$ the $k$th \emph{paving} and $G_k = G_{a_1} \times \cdots \times G_{a_j}$ the $k$th \emph{holonomy group}. The $k$th \emph{holonomy} is the transformation semigroup
	\begin{equation*}
		\mathrm{Hol}_k(X, S) = \overline{(X_k, G_k)}.
	\end{equation*}
This is well-defined since $G_k$ is independent of the choice of $a_1, \cdots, a_j$ in ${XS}/{\sim}$.

\begin{theorem}[Eilenberg] \label{thm:Eilenberg}
If $(X, S)$ is a transformation semigroup with a height function $\eta: XS \to \mathbb{Z}$ such that $\eta(X, S) = n$, then
	\begin{equation*}
		(X, S) \prec \mathrm{Hol}_1(X, S) \wr \cdots \wr \mathrm{Hol}_n(X, S),
	\end{equation*}
where $\mathrm{Hol}_i(X, S)$ is the $i$th holonomy for $1 \leq i \leq n$.
\end{theorem}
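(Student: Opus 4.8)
The plan is to exhibit an explicit covering. First I would coordinatize $X$ by maximal descending chains in $XS$. For each $x \in X$ there is a chain
\[
	\{x\} = c_0 \subsetneq c_1 \subsetneq \cdots \subsetneq c_n = X
\]
with $\eta(c_k) = k$ and $c_{k-1}$ a brick of $c_k$. The state set $X_1 \times \cdots \times X_n$ of the wreath product is built precisely to record, at each height $k$, which brick is chosen. I would therefore define the covering $\varphi$ on $X_1 \times \cdots \times X_n$ as the partial map that reads coordinates from the autonomous top factor $\beta_n$ down to $\beta_1$ and reconstructs such a chain: having obtained $c_k$ of height $k$, one uses the canonical isomorphism $(X_{c_k}, G_{c_k}) \cong (X_a, G_a)$ to the chosen height-$k$ representative $a$ to interpret the relevant component of $\beta_k$ as a brick $c_{k-1}$ of $c_k$, and finally outputs the singleton $c_0 = \{x\}$. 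Surjectivity onto $X$ is immediate, since every $x$ admits at least one such chain.

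The crux is the holonomy action lemma. For $a \in XS$ with $|a| > 1$ and $s \in S$, if $as \sim a$ then $s$ restricts to a bijection permuting the bricks $X_a$, hence induces an element of $G_a \subset \mathrm{Sym}(X_a)$; and since $a \sim b$ yields $(X_a, G_a) \cong (X_b, G_b)$, this permutation transports coherently across an entire $\sim$-class, which is exactly what makes $\mathrm{Hol}_k$ independent of the representatives $a_1, \dots, a_j$ as already noted. If instead $as < a$, so that the height strictly drops, the portion of the chain below $a$ collapses and must be reset; this is where the constant maps adjoined in the closure $\overline{(X_k, G_k)}$ are indispensable.

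With this lemma in hand I would, for each $s \in S$, assemble the corresponding element $(f, t)$ of the iterated wreath-product semigroup level by level, starting from the autonomous factor $\mathrm{Hol}_n$ and descending. At a height where $s$ preserves the relevant set up to $\sim$, the coordinate acts by the induced holonomy permutation in $G_k$; at the height where the image first drops, it acts by a constant reset map drawn from $\overline{(X_k, G_k)}$. The twisted wreath action makes the choice at height $k$ depend on the coordinates above it, matching the fact that the brick selected at height $k$ depends on the set $c_k$ already determined. I would then verify that the diagram of the divisibility definition commutes, i.e. that $\varphi$ intertwines $s$ with $(f, t)$, by tracking a chain for $x$ and the induced chain for $xs$ through the coordinates and inducting downward on height.

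The main obstacle I anticipate is the height-drop case: one must check that when $c_k s$ has strictly smaller height the constant resets realign \emph{all} lower coordinates onto a genuine chain for $xs$, and that the resulting level-by-level data defines a single, well-formed element of the iterated wreath-product semigroup rather than merely a family of partial actions. Showing that the holonomy permutations and the resets are mutually consistent under the twisted action, so that $\varphi$ is simultaneously surjective and equivariant, is the technical heart of the argument; the well-definedness of the $G_a$-action up to $\sim$ supplies the coherence that makes the assembly go through.
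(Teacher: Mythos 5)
Your overall strategy---reading off descending chains of bricks, acting by holonomy permutations where the height is preserved and by constant resets where it drops---is the same mechanism as the paper's proof. But your very first step contains a genuine error: it is not true that every $x \in X$ lies on a chain $\{x\} = c_0 \subsetneq c_1 \subsetneq \cdots \subsetneq c_n = X$ with $\eta(c_k) = k$ for every $k$. The height function axioms only force $a < b$ to imply $\eta(a) < \eta(b)$, and that every value in $\{0, \ldots, n\}$ is attained \emph{somewhere} in $XS$; they do not prevent a brick of a height-$k$ set from having height strictly less than $k - 1$. Concretely, take $X = \{1, 2, 3\}$ and $S$ generated by the idempotent $s$ fixing $1, 2$ and sending $3 \mapsto 1$. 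Then $XS = \{X, \{1,2\}, \{1\}, \{2\}, \{3\}, \emptyset\}$, with $\eta(X) = 2$ and $\eta(\{1,2\}) = 1$, and the bricks of $X$ are $\{1,2\}$ and $\{3\}$; the only chain through $x = 3$ is $\{3\} \subsetneq X$, which skips height $1$. So the coordinatization you propose does not exist in general, and the rule ``read the coordinate $\beta_k$ to choose a brick of $c_k$'' is undefined as soon as a chain jumps.

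This is exactly the difficulty the paper's proof is organized around, and it is what you are missing. The paper never coordinatizes $X$ globally; it introduces relational coverings $\prec_\mathrm{rel}$ equipped with a rank, starts from the trivial rank-$n$ relation $\mathbf{1} \to X$, and inductively shows that a rank-$k$ relational covering $\varphi: Y \to X$ can be refined to a rank-$(k-1)$ covering $\psi: X_k \times Y \to X$ by prepending $\mathrm{Hol}_k(X, S)$, where $\psi$ is defined to carry $y\varphi$ along \emph{unchanged} (ignoring the new coordinate) whenever $\eta(y\varphi) < k$. That clause is precisely what absorbs the height jumps your chains cannot represent, and the downward induction on rank replaces your single global verification. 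A second, smaller gap: the isomorphism $(X_{c_k}, G_{c_k}) \cong (X_{a_i}, G_{a_i})$ you call canonical is not canonical; it depends on chosen translations $u_y, v_y \in S$ with $a_i u_y = y\varphi$ and $y\varphi v_y = a_i$, and the element making the divisibility square commute is the composite $u_y s v_{yt}$ (when $y\varphi s = yt\varphi$), not a permutation induced by $s$ alone. Fixing these choices once and for all and checking $\psi s \subset (f, t)\psi$ with $f\pi_i = u_y s v_{yt}$ in the height-preserving case and a constant $\bar{b}_i$ in the height-dropping case is the actual content of the induction step; your proposal correctly identifies this as the technical heart but leaves both it and the device needed to make it well-posed unsupplied.
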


The decomposition in Theorem \ref{thm:Eilenberg} is known as the \emph{holonomy decomposition} of $(X, S)$ induced by $\eta$. For brevity, we write
	\begin{equation*}
		\mathrm{Hol}_*(X,S) = \mathrm{Hol}_1(X, S) \wr \cdots \wr \mathrm{Hol}_n(X, S).
	\end{equation*}
Since $\bar{\mathbf{n}}^1$ embeds in $n$ direct copies of $\bar{\mathbf{2}}^1$, applying Theorem \ref{thm:JordanHolder} to Theorem \ref{thm:Eilenberg} indeed leads to a prime decomposition of $(X, S)$. If $\mathrm{Hol}_*(X, S) = (Y, T)$, then $T$ is called the \emph{holonomy monoid} of $(X, S)$.

\begin{definition}
Let $(X, S)$ and $(Y, T)$ be transformation semigroups. If there exists a surjective relation $\varphi: Y \to X$ such that for every $s \in S$,
	\begin{equation*}
		\varphi s \subset t \varphi
	\end{equation*}
for some $t \in T$, then $(Y, T)$ is said to cover $(X, S)$ by $\varphi$. We write
	\begin{equation*}
		(X, S) \prec_\mathrm{rel} (Y, T)
	\end{equation*}
to mean $(Y, T)$ is a cover of $(X, S)$, and refer to $\varphi$ as a relational covering.
\end{definition}

If $Y\varphi \subset XS$, then the \emph{rank} of $\varphi$ is the smallest integer $k \geq 0$ such that $\eta(y \varphi) \leq k$ for all $y \in Y$. Note that $(X, S)$ divides $(Y, T)$ when $\varphi$ is of rank $0$.

\begin{proof}[Sketch of proof of Theorem \ref{thm:Eilenberg}]
It suffices to show that if $\varphi: Y \to X$ is of rank $k$, then there exists a map $\psi: X_k \times Y \to X$ of rank $k - 1$ such that
	\begin{equation*}
		(X, S) \prec_\mathrm{rel} \mathrm{Hol}_k(X, S) \wr (Y, T)
	\end{equation*}
by $\psi$, for $\mathbf{1}^1$ covers $(X, S)$ by the unique relation $\mathbf{1} \to X$ of rank $n$.

Let $a_1, \cdots, a_j$ represent elements of height $k$ in ${XS}/{\sim}$. If $\eta(y\varphi) = k$, then $y\varphi \sim a_i$ for a unique $1 \leq i \leq j$, so that we can find $u_y, v_y \in S$ such that
	\begin{equation*}
		a_i{u_y} = y\varphi \text{ and } y\varphi{v_y} = a_i.
	\end{equation*}
Assume such a selection has been made for all $y \in Y$ such that $\eta(y\varphi) = k$. We write a projection map as $\pi_i: (X_k, G_k) \to (X_{a_i}, G_{a_i})$. Define $\psi: X_k \times Y \to X$ by
	\begin{equation*}
		(b, y)\psi =
		\begin{cases}
			y\varphi & \text{if } \eta(y\varphi) < k, \\
			b{\pi_i}u_y & \text{if } y\varphi \sim a_i.
		\end{cases}
	\end{equation*}
It is easy to see that $\psi$ is of rank $k - 1$ with $\mathrm{Im}(\psi) \subset XS$.

Fix $s \in S$. It remains to prove that there exists $(f, t) \in (G_k \cup \bar{X}_k)^Y \rtimes T$ such that the diagram
	\begin{center}
		\begin{tikzpicture}
			\node (0) {$X_k \times Y$};
			\node (1) [below of=0] {$X$};
			\node (2) [right of=1, xshift=20pt] {$X$};
			\node (3) [above of=2] {$X_k \times Y$};
			\draw[->] (0) to node [swap] {$\psi$} (1);
			\draw[->] (1) to node [swap] {$s$} (2);
			\draw[->] (3) to node {$\psi$} (2);
			\draw[->] (0) to node {$(f, t)$} (3);
		\end{tikzpicture}
	\end{center}
commutes. Choose any $t \in T$ satisfying $\varphi{s} \subset t\varphi$. We can find a map $f: Y \to G_k \cup \bar{X}_k$ such that if $y\varphi \sim a_i$, then
	\begin{equation*} \label{map_condition}
		f\pi_i =
		\begin{cases}
			u_ysv_{yt} & \text{if } y\varphi{s} = yt\varphi, \\
			\bar{b}_i & \text{if } y\varphi{s}v_{yt} \subset b_i \text{ with } b_i \in X_{a_i}.
		\end{cases}
	\end{equation*}
It is routine to check that $\psi{s} \subset (f, t)\psi$.
\end{proof}

Given $t \in T$, $t_i$ denotes the $i$th component of $t$. In particular, if $1 \leq i < n$, then $t_i$ is a map $X_{i + 1} \times \cdots \times X_n \to G_i \cup \bar{X}_i$. Suppose that if either
	\begin{compactenum}
		\item there exists $(x_{k + 1}, \cdots, x_n) \in X_{k + 1} \times \cdots \times X_n$ such that $(x_{k + 1}, \cdots,  x_n)t_k \in G_k$ for some $1 < k < n$,
		\item $t_n \in G_n$ with $k = n$,
	\end{compactenum}
then $(x_{i + 1}, \cdots, x_n)t_i \in G_i$ for all $1 \leq i < k$. Then $t$ is said to satisfy the \emph{Zeiger property}.

\begin{lemma} \label{lem:ZeigerProperty}
Suppose $(X, S)$ is a transformation semigroup with a height function $\eta: XS \to \mathbb{Z}$ such that $\eta(X, S) = n$, which admits a decomposition
	\begin{equation*}
		\mathrm{Hol}_*(X, S) = (Y, T).
	\end{equation*}
Then the set $U$ of elements of $T$ satisfying the Zeiger property forms a submonoid of $T$ such that $(Y, U)$ covers $(X, S)$.
\end{lemma}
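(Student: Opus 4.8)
The plan is to treat the two assertions separately: first that $U$ is a submonoid of $T$, then that restricting to $U$ preserves the covering. Throughout I work with the component description of the iterated wreath product $T = \mathrm{Hol}_*(X,S)$, so that an element $t$ is recorded by its components $t_i\colon X_{i+1} \times \cdots \times X_n \to G_i \cup \bar X_i$ for $1 \le i < n$, together with $t_n \in G_n \cup \bar X_n$. The one computation I need at the outset is the product rule for these components, which comes from unwinding Definition \ref{def:WreathProduct} coordinatewise:
\[
	(x_{i+1}, \ldots, x_n)(tt')_i = \bigl[(x_{i+1}, \ldots, x_n)t_i\bigr]\bigl[(x'_{i+1}, \ldots, x'_n)t'_i\bigr],
\]
where $x'_j = x_j \cdot ((x_{j+1}, \ldots, x_n)t_j)$ is the $t$-image of the higher coordinates. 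The decisive structural fact is that in each factor $G_i \cup \bar X_i$ the constant maps $\bar X_i$ form a two-sided ideal, so a product lies in $G_i$ if and only if both of its factors do. I would also fix at this point the precise reading of the Zeiger property as the per-chain statement: group action at level $k$ along a tail $(x_{k+1}, \ldots, x_n)$ forces group action at every lower level $i$ along every extension of that tail. This is the form under which both the closure argument and the covering argument go through.

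For the submonoid claim, the identity of $T$ has every component equal to the identity of the relevant $G_i$, so every level is a group level and the Zeiger property holds trivially. For closure, I would take $t, t' \in U$ and suppose $(x_{k+1}, \ldots, x_n)(tt')_k \in G_k$ for some level $k$. By the product rule and the ideal fact, both $(x_{k+1}, \ldots, x_n)t_k \in G_k$ and $(x'_{k+1}, \ldots, x'_n)t'_k \in G_k$. Applying the Zeiger property of $t$ to the first witness and of $t'$ to the second, for every $i < k$ and every extension of the two tails one gets $(x_{i+1}, \ldots, x_n)t_i \in G_i$ and $(x'_{i+1}, \ldots, x'_n)t'_i \in G_i$, whence the product rule forces $(x_{i+1}, \ldots, x_n)(tt')_i \in G_i$. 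Thus $tt' \in U$, and $U$ is a submonoid.

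For the covering, I would revisit the element $(f,t)$ built in the proof of Theorem \ref{thm:Eilenberg} and show that the element of $T$ assembled across all $n$ rank-reduction steps already satisfies the Zeiger property; the conclusion $(Y,U) \prec_\mathrm{rel} (X,S)$ then follows from the very covering already constructed. Concretely, the level-$k$ component of the covering element of a fixed $s$ is $u_y s v_{yt} \in G_k$ exactly when $y\varphi s = yt\varphi$, i.e. when $s$ carries the height-$k$ set $y\varphi$ to an equivalent height-$k$ set, and it is a reset $\bar b_k \in \bar X_k$ when $s$ drives $y\varphi$ strictly below height $k$. The heart of the matter is the geometric observation that if $s$ maps a height-$k$ set bijectively onto an equivalent one, permuting its bricks, then it restricts to a bijection of each brick onto a brick of the image and so preserves height at level $k-1$ as well; iterating this, group action at level $k$ propagates to group action at every level $i < k$ along the refining chain of bricks, which is precisely the Zeiger property.

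The step I expect to be the main obstacle is this last geometric propagation: one must track the selections $u_y, v_y$ and the equivalences $y\varphi \sim a_i$ coherently down the chain of pavings and verify that the group-versus-reset dichotomy recorded by the components is governed by the injectivity of $s$ on the tracked subsets, so that resets can only occur as an upper segment of levels. The closure part is essentially mechanical once the component product rule and the ideal property of $\bar X_i$ are in hand; it is establishing this monotonicity of the covering element that carries the real content. Once it is in place, the covering claim is immediate, and together with the submonoid claim it yields the lemma.
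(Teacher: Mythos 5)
Your proposal is correct and follows essentially the same route as the paper: the paper likewise treats the submonoid claim as routine (your product-rule-plus-ideal argument supplies the omitted verification), and its covering argument is exactly your plan --- revisit the element built in the proof of Theorem \ref{thm:Eilenberg} and make each level-$(k-1)$ component the group element $usv$ on the tracked factor whenever level $k$ acts by a group element, which is precisely your brick-by-brick propagation of bijectivity. The selection-coherence issue you flag as the main obstacle is passed over just as tersely in the paper's proof; your injectivity/equal-cardinality observation (inclusion of the tracked sets plus equal size forces equality, hence the group case at the lower level) is the right way to discharge it.
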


\begin{proof}
It is easy to see that $U$ is indeed a monoid. Assume $(x_{k + 1}, \cdots, x_n)t_k \in G_k$ for $1 < k < n$. By construction,
	\begin{equation*}
		(x_k, \cdots, x_n)\varphi{s} = (x_k, \cdots, x_n)(t_k, \cdots, t_n)\varphi,
	\end{equation*}
where $\varphi: X_k \times \cdots \times X_n \to X$ is a relation of rank $k - 1$ such that
	\begin{equation*}
		(X, S) \prec_\mathrm{rel} \mathrm{Hol}_k(X, S) \wr \cdots \wr \mathrm{Hol}_n(X, S)
	\end{equation*}
by $\varphi$. If $a_1, \cdots, a_j$ are elements of height $k$ in ${XS}/{\sim}$, then $(x_{k + 1}, \cdots, x_n)\varphi \sim a_i$ for some $1 \leq i \leq j$. Define $t_{k - 1}: X_k \times \cdots \times X_n \to G_{k - 1}$ by
	\begin{equation*}
		(x_k, \cdots, x_n)t_{k - 1}\pi_k = u_{(x_{k + 1}, \cdots, x_n)}sv_{(x_{k + 1}, \cdots, x_n)}.
	\end{equation*}
Put $t_{k - 1}\pi_i = 1_{G_{a_i}}$ for $i \neq k$. The case when $k = n$ is similar.
\end{proof}

A height function $\eta$ uniquely determines $U$, which is referred to as the \emph{reduced holonomy monoid} of $(X, S)$. We also write
	\begin{equation*}
		\widetilde{\mathrm{Hol}}_*(X, S) = (Y, U),
	\end{equation*}
and call $(Y, U)$ the \emph{reduced holonomy decomposition} of $(X, S)$ induced by $\eta$.

\subsection{Representation Theory of Reduced Holonomy Monoid}

Suppose a height function $\eta: XS \to \mathbb{Z}$ on a transformation semigroup $(X, S)$ such that $\eta(X, S) = n$ induces the reduced holonomy decomposition
	\begin{equation*}
		\widetilde{\mathrm{Hol}}_*(X, S) = (Y, U).
	\end{equation*}
We wish to study the representation theory of the transition monoid $(Y, U, \mathbb{P})$. Since $\mathbb{P}U$ does not have an additive structure, we apply Theorem \ref{thm:MunnPonizovskii} to $\mathbb{C}U$, and consider the inclusion $\mathbb{P}U \hookrightarrow \mathbb{C}U$.

The depth function on $U$ is a map $\delta: U \to \mathbb{Z}$ such that for $u \in U$, $\delta(u) = k$ if there exists $0 \leq k \leq m$ satisfying
	\begin{compactenum}
		\item $\mathrm{Im}(u_i) \cap G_i \neq \emptyset$ for $1 \leq i \leq k$,
		\item $\mathrm{Im}(u_i)$ is a singleton in $\bar{X}_i$ for $k < i \leq n$,
	\end{compactenum}
and $\delta(u) = -1$ otherwise. The depth of $(X, S)$ is the largest integer $-1 \leq m \leq n$ such that $\delta(u) = m$ for some $u \in U$. We refer to the pair $(m, n)$ as the \emph{dimension} of $(X, S)$, and write $\dim(X, S) = (m, n)$.

\begin{proposition} \label{prop:RegularityCondition}
Let $(X, S)$ be a transformation semigroup with height function $\eta: XS \to \mathbb{Z}$, which induces a reduced holonomy decomposition
	\begin{equation*}
		\widetilde{\mathrm{Hol}}_*(X, S) = (Y, U)
	\end{equation*}
such that $\dim(X, S) = (m, n)$.	Then $u \in U$is regular if and only if $\delta(u) = k$ for some $0 \leq k \leq m$. Therefore $e \in U$ such that $\delta(e) = k$ is idempotent in $U$ if and only if
	\begin{compactenum}
		\item $(x_{i + 1}, \cdots, x_n)e_i = 1_{G_i}$ for $1 \leq i \leq k$,
		\item $e_i = \bar{x}_i$ for $k < i \leq n$
	\end{compactenum}
for some $(x_{k + 1}, \cdots, x_n) \in X_{k + 1} \times \cdots \times X_n$.
\end{proposition}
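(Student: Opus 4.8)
The plan is to work entirely with the coordinate description of $U$ inside the iterated wreath product, where $u \in U$ is encoded by its components $u_i \colon X_{i+1} \times \cdots \times X_n \to G_i \cup \bar{X}_i$. First I would record the composition rule: for $u, v \in U$, writing $y_j = x_j\bigl((x_{j+1}, \ldots, x_n) u_j\bigr)$ for the coordinates of the image under $u$, the $i$th component of $uv$ at the original argument is $\bigl((x_{i+1}, \ldots, x_n)u_i\bigr)\bigl((y_{i+1}, \ldots, y_n)v_i\bigr)$. Specializing to $v = u$ yields the pointwise idempotency criterion: $e^2 = e$ holds exactly when, at each level $i$ and each argument, the second factor $(y_{i+1}, \ldots, y_n)e_i$ equals $1_{G_i}$ whenever the first factor lies in $G_i$, and reproduces the same constant whenever the first factor lies in $\bar{X}_i$. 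The only algebraic input is that the idempotents of $G_i \cup \bar{X}_i$ are precisely $1_{G_i}$ and the constant maps.

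I would settle the idempotent description next, as the regularity statement rests on it. Fix $e$ with $\delta(e) = k$. For $i > k$ the depth hypothesis already makes $\mathrm{Im}(e_i)$ a single constant $\bar{x}_i$, which is condition (2) and is automatically idempotent; for $i < k$ the Zeiger property forces $e_i$ to be group-valued at every argument, while $\mathrm{Im}(e_k) \cap G_k \neq \emptyset$. The crux is a downward induction along the frozen tail: at any argument where $e_k$ lands in $G_k$ the image tail equals the constants $(x_{k+1}, \ldots, x_n)$, so the criterion gives $(x_{k+1}, \ldots, x_n)e_k = 1_{G_k}$; feeding this identity downward makes each induced coordinate $y_j$ with $j \le k$ equal its own input along tails ending in the constants, whence the criterion at level $i$ collapses to $(x_{i+1}, \ldots, x_n)e_i = 1_{G_i}$, which is condition (1). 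The converse is immediate, since (1) and (2) return the criterion verbatim. A useful by-product is $\mathrm{Fix}(e) = X_1 \times \cdots \times X_k \times \{(x_{k+1}, \ldots, x_n)\}$, so every depth-$k$ idempotent has image a coordinate box.

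For the regularity statement, the direction $\delta(u) = k \Rightarrow u$ regular I would make constructive. Using a frozen argument $\gamma$ at which $u_k$ lands in $G_k$, together with the permutations the lower components supply along the induced trajectory, one checks that $u$ maps onto the box $X_1 \times \cdots \times X_k \times \{(c_{k+1}, \ldots, c_n)\}$. I then define $v \in U$ as a section of $u$ over this box: for $i \le k$ let $v_i$ return the inverse permutation of the group value $u_i$ produces, and for $i > k$ a fixed constant, extending every $v_i$ with $i < k$ to a group value at all arguments so that the Zeiger property persists. A substitution into the composition rule gives $uvu = u$; then $uv$ is idempotent and lies in $R_u$, so $u$ is regular.

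The converse—that regularity forces $\delta(u) \ge 0$—is where the real work lies, and I expect it to be the main obstacle. If $u$ is regular, then $R_u$ contains an idempotent $f \in U$, and unwinding $u = f\beta$ and $f = u\alpha$ yields $fu = u$ together with $\ker f = \ker u$. By the idempotent description, $f$ is group-valued up to its depth and constant above, so its behavior on the top coordinates is rigidly controlled by the Zeiger property. The plan is to show that the kernel of such an $f$ can never coincide with that of a $u$ with $\delta(u) = -1$. The defining feature of $\delta(u) = -1$ is a component $u_{i_0}$, lying above the top group-hitting level $k^\ast = \max\{i : \mathrm{Im}(u_i) \cap G_i \neq \emptyset\}$, that is constant yet not single-valued; matching this collapse inside an idempotent would demand a group acting at level $i_0$ where none is present, and a group at $i_0$ would, by the Zeiger property, force groups at every lower level and hence a different fiber structure. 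Converting this tension between the groupless collapse of $u$ and the Zeiger-rigidity of idempotents into a clean kernel comparison for general $n$—rather than in the transparent two-level case—is the step I would develop with the most care.
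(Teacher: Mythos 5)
Your coordinate composition rule, your idempotent characterization, and your constructive half of the regularity claim (``$\delta(u)=k\ge 0$ implies $u$ regular'') are all sound and essentially coincide with the paper's proof: the paper likewise freezes a witnessing tail $(x_{k+1},\cdots,x_n)$, sets $v_i=\bar{x}_i$ for $k<i\le n$, and uses Lemma \ref{lem:ZeigerProperty} to choose group-valued $v_i$ with $v_i\,{}^{(v_{i+1},\cdots,v_n)}u_i=1_{G_i}$ for $1\le i\le k$.

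The genuine gap is the converse direction, the one you yourself flag as ``the main obstacle'': regular implies $\delta(u)\ge 0$. Your plan---pass to an idempotent $f\in R_u$, establish $\ker f=\ker u$, and show such a kernel is incompatible with a depth-$(-1)$ element---is never carried out, and it is also a detour, since no idempotent and no kernel comparison are needed. The paper settles this direction by a direct downward induction on components using only $uvu=u$ and the absorbing property of constant maps. Suppose $\mathrm{Im}(u_i)$ is a singleton in $\bar{X}_i$ for $k\le i\le n$ and $\mathrm{Im}(u_{k-1})\subset\bar{X}_{k-1}$. The level-$(k-1)$ component of $uvu=u$ reads
	\begin{equation*}
		u_{k-1}\,{}^{(u_k,\cdots,u_n)}v_{k-1}\,{}^{(u_k,\cdots,u_n)(v_k,\cdots,v_n)}u_{k-1}=u_{k-1}.
	\end{equation*}
Because the tail $(u_k,\cdots,u_n)$ has singleton image, every input is carried to one frozen tail, so the third factor is a single fixed constant map $\bar{b}\in\bar{X}_{k-1}$; and any product whose last factor is $\bar{b}$ equals $\bar{b}$. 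Hence $u_{k-1}$ is identically $\bar{b}$, i.e.\ $\mathrm{Im}(u_{k-1})$ is a singleton. Starting at level $n$ (where the component is a single element of $G_n\cup\bar{X}_n$) and descending, every $\bar{X}_i$-valued component above the highest group-hitting level is forced to have singleton image, and the Zeiger property supplies the group condition below that level; together this is exactly $\delta(u)\ge 0$. Without this step---or a completed version of your kernel argument, which for iterated wreath products would require substantially more bookkeeping than the two-level case you allude to---your proposal proves only half of the regularity statement, and hence does not yet prove the proposition.
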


\begin{proof}
If $u \in U$ is regular, there exists $v \in U$ such that $uvu = u$. Fix $1 < k \leq n$. Suppose $\mathrm{Im}(u_{k - 1}) \subset \bar{X}_{k - 1}$ and $\mathrm{Im}(u_k)$ is a singleton in $\bar{X}_k$ for $k \leq i \leq n$. Then
	\begin{equation*}
		u_{k - 1}{^{(u_k, \cdots, u_n)}v_{k - 1}}{^{(u_k, \cdots, u_n)(v_k, \cdots, v_n)}u_{k - 1}} = u_{k - 1},
	\end{equation*}
and so $\mathrm{Im}(u_{k - 1})$ is also a singleton in $\bar{X}_{k - 1}$.

Conversely, assume $u \in U$ with $\delta(u) = k$ for some $1 \leq k \leq m$. This means $(x_{k + 1}, \cdots, x_n)u_k  \in G_k$ for some $(x_{k + 1}, \cdots, x_n) \in X_{k + 1} \times \cdots \times X_n$. We want to find $v \in U$ such that $uvu = u$. Set $v_i = \bar{x}_i$ for $k < i \leq n$. It follows from Lemma \ref{lem:ZeigerProperty} that $(x_{i + 1}, \cdots, x_n)u_i \in G_i$ when $1 \leq i < k$. Therefore there exists $v_i: X_{i + 1} \times \cdots \times X_n \to G_i$ such that
	\begin{equation*}
		v_i{^{(v_{i + 1}, \cdots, v_n)}}{u_i} = 1_{G_i}
	\end{equation*}
for $1 \leq i \leq k$.
\end{proof}

Given $1 \leq k \leq m$, denote by $H_k$ the group acting on $X_1 \times \cdots \times X_k$ for the transformation group
	\begin{equation*}
		(X_1, G_1) \wr \cdots \wr (X_k, G_k).
	\end{equation*}
For fixed $y \in Y$, define
	\begin{equation*}
		E(U, y) = \{e \in U \mid e^2 = e \text{ and } e_i = \bar{y}_i \text{ whenever } e_i \neq 1_{G_i} \text{ for } 1 \leq i \leq n\}.
	\end{equation*}
Then $E(U, y)$ contains exactly one idempotent of depth $k$ for each $1 \leq k \leq m$. We also write
	\begin{equation*}
		Y_i = X_{i + 1} \times \cdots \times X_n
	\end{equation*}
for $0 \leq i \leq n$, so that $Y_0 = Y$ and $Y_n = \emptyset$. Then $H_k \times \bar{Y}_k$ is a subsemigroup of $U$ containing $e \in E(U, y)$ such that $\delta(e) = k$.

\begin{proposition} \label{prop:Class}
Let $(X, S)$ be a transformation semigroup with height function $\eta: XS \to \mathbb{Z}$, which induces a reduced holonomy decomposition
	\begin{equation*}
		\widetilde{\mathrm{Hol}}_*(X, S) = (Y, U)
	\end{equation*}
such that $\dim(X, S) = (m, n)$. Fix $y \in Y$. If $u, v \in U$ are regular with $\delta(u) = k$, then
	\begin{compactenum}
		\item $u \sim_\mathfrak{l} v$ if and only if $\delta(u) = \delta(v)$ and $u_i = v_i$ for every $k < i \leq n$,
		\item $u \sim_\mathfrak{j} v$ if and only if $\delta(u) = \delta(v)$.
	\end{compactenum}
For $1 \leq k \leq m$, if $e \in E(U, y)$ such that $\delta(e) = k$, then
	\begin{compactenum} \setcounter{enumi}{2}
		\item $R_e \cong H_k \times \bar{Y}_k$,
		\item $H_e \cong H_k$.
	\end{compactenum}
\end{proposition}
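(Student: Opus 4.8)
The engine of the whole proposition is the wreath-product multiplication $(wu)_i = w_i\cdot{}^{(w_{i+1},\dots,w_n)}u_i$ together with the composition rules in each closure $\overline{(X_i,G_i)}$: a constant map absorbs on the right, $z\cdot\bar x=\bar x$, while a constant map on the left keeps the product inside $\bar X_i$. I would record two consequences. If $\delta(u)=k$ then $u_i$ is a constant map into $\bar X_i$ for every $i>k$, so left multiplication leaves the top untouched, $(wu)_i=u_i$ for all $w$ and all $i>k$; dually, right multiplication destroys the group part above level $k$, making $(ua)_i$ take values in $\bar X_i$, so $\delta(ua)\le\delta(u)$. The first fact gives the forward direction of (1): an $\mathfrak l$-equivalence $au=v$, $bv=u$ forces $u_i=v_i$ whenever $i$ exceeds the depth of either side, and since these components are constant this forces $\delta(u)=\delta(v)$. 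Because finite semigroups are stable, $\mathfrak j=\mathfrak d=\mathfrak l\circ\mathfrak r$, so any $\mathfrak j$-equivalence factors as an $\mathfrak l$-step followed by an $\mathfrak r$-step through the regular class $J_u$; the first step preserves depth and the second can only lower it in either direction, so depth is constant on $J_u$, which is the forward direction of (2).

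For (3) I would identify $R_e$ with the subsemigroup $H_k\times\bar Y_k$ sitting in $U$. Since $\bar Y_k$ is a right-zero semigroup and $H_k$ is a group, $H_k\times\bar Y_k$ is a single $\mathfrak r$-class whose $\mathfrak h$-classes are the cosets $H_k\times\{\bar w\}$, $\bar w\in\bar Y_k$; by the observation that $\mathfrak r$-equivalence of regular elements is inherited from the ambient semigroup, $H_k\times\bar Y_k\subseteq R_e$. For the reverse inclusion I take $w\sim_\mathfrak r e$ and use the factorizations $w=ew'$ and $e=ww''$ guaranteed by Proposition \ref{prop:Identities}. Because $e_i=1_{G_i}$ for $i\le k$ and $e_i=\bar y_i$ for $i>k$, the factorization $w=ew'$ collapses all coordinates above level $k$ to the basepoint $y$, so the lower components of $w$ become independent of those coordinates; the factorization $e=ww''$ forces each lower component of $w$ to be $G_i$-valued, since a constant value could never multiply up to $1_{G_i}$. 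Hence $w\in H_k\times\bar Y_k$, and $R_e=H_k\times\bar Y_k$.

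Part (4) is then immediate: $H_e=R_e\cap L_e$, and by (1) the class $L_e$ consists of the depth-$k$ elements whose components above level $k$ coincide with those of $e$. Intersecting with $R_e=H_k\times\bar Y_k$ pins the $\bar Y_k$-factor to that of $e$ and leaves $H_k\times\{\bar y_k\}\cong H_k$, with $e$ as identity. This also prepares the converse of (1) by a counting argument: $R_e$ meets $|Y_k|$ distinct $\mathfrak h$-classes, so by the egg-box structure $J_e$ has exactly $|Y_k|$ $\mathfrak l$-classes, which is precisely the number of admissible tuples of top components; since the forward direction of (1) assigns distinct such tuples to distinct $\mathfrak l$-classes, this assignment is a bijection. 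Granting the converse of (2) below to place $u$ and $v$ in the common class $J_e$, equality of depth and of top components then forces $\mathfrak l$-equivalence.

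The remaining and, I expect, hardest point is the converse of (2): equal depth should force $\mathfrak j$-equivalence. I would first reduce to idempotents—any regular $u$ satisfies $utu=u$, whence $ut$ is an idempotent of the same depth lying in $R_u$—so that it suffices to join any two depth-$k$ idempotents by a $\mathfrak j$-chain. Putting both into the standard form of Proposition \ref{prop:RegularityCondition}, I would build the connecting elements level by level, using invertibility in the holonomy groups $G_i$ on the bottom $k$ levels and constants above, and moving the basepoint by means of the Eilenberg translations $u_y,v_y$. The crux lies exactly here: one must check that regularity, through the Zeiger property, supplies enough $G_i$-valued arguments for these group inverses to be realized by genuine elements of $U$, and it is this verification that carries the real weight of the proof.
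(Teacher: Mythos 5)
Your handling of (3), (4), and the forward implications of (1) and (2) is essentially the paper's own argument: the paper also pins down $R_e$ via the two factorizations $u = ev$ and $e = uw$ (which force the bottom components to be $G_i$-valued and independent of the top coordinates), and also obtains (4) from (1) and (3). The genuine gap is that neither of the two hard converses is ever proven. You derive the converse of (1) from the converse of (2) (needed to place $u$ and $v$ in a common class $J_e$, after which your egg-box counting is legitimate), and you then defer the converse of (2) as the ``hardest point,'' offering only a sketch whose crux --- that the Zeiger property supplies enough $G_i$-valued arguments for the required group inverses to be realized by genuine elements of $U$ --- you explicitly do not carry out. So the proposal, as written, establishes neither converse, and everything that depends on them (including your route to the converse of (1)) is left hanging.

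The paper proceeds in the opposite order, and that is what dissolves the difficulty. It proves the converse of (1) \emph{first}, by a direct construction: by Lemma \ref{lem:ZeigerProperty} and Proposition \ref{prop:RegularityCondition} there is a point $(x_{k+1}, \cdots, x_n)$ at which every bottom component $u_i$, $1 \leq i \leq k$, takes values in $G_i$; set $w_i = \bar{x}_i$ for $i > k$ and solve
	\begin{equation*}
		w_i\,{}^{(w_{i+1}, \cdots, w_n)}u_i = v_i
	\end{equation*}
for $w_i$ using inverses in $G_i$, giving $wu = v$ (the components above level $k$ take care of themselves since constants absorb on the left, and $u_i = v_i$ there by hypothesis). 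With this in hand, the converse of (2) is a one-liner: take the idempotent $e$ with $e_i = 1_{G_i}$ for $i \leq k$ and $e_i = v_i$ for $i > k$; then $u \sim_\mathfrak{r} ue$, and $ue \sim_\mathfrak{l} v$ by (1), so $u \sim_\mathfrak{j} v$. No chain of idempotents and no basepoint translations $u_y, v_y$ are needed (those are elements of $S$ used in building the holonomy covering, not elements of $U$, so they cannot serve as connecting elements anyway). In short, the construction you deferred is exactly the paper's proof of the converse of (1); once it is done, your counting argument becomes unnecessary, though it would be a valid alternative path to (1) if the converse of (2) were proven independently.
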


\begin{proof}
(1) If $u \sim_\mathfrak{l} v$, then it is necessary that $\delta(u) = \delta(v)$, and hence $u_i = v_i$ for $k < i \leq n$. Assume the converse. By Lemma \ref{lem:ZeigerProperty} and Proposition \ref{prop:RegularityCondition}, there is $(x_{k + 1}, \cdots, x_n) \in X_{k + 1} \times \cdots \times X_n$ such that $(x_{i + 1}, \cdots, x_n)u_i \in G_i$ for $1 \leq i \leq k$. Therefore we can find $w \in U$ such that
	\begin{equation*}
		w_i{^{(w_{i + 1}, \cdots, w_n)}u_i} = v_i
	\end{equation*}
for $1 \leq i \leq k$ once we set $w_i = \bar{x}_i$ for $k < i \leq n$. This shows that $wu = v$. By symmetry, we conclude that $u \sim_\mathfrak{l} v$.

(2) Again, $u \sim_\mathfrak{j} v$ implies that $\delta(u) = \delta(v)$. Conversely, if $\delta(u) = \delta(v)$, then $u \sim_\mathfrak{r} ue$ if $e \in U$ such that $\delta(e) = k$ is an idempotent defined by
	\begin{equation*}
		e_i =
		\begin{cases}
			1_{G_i} & \text{for } 1 \leq i \leq k, \\
			v_i & \text{otherwise}. 
		\end{cases}
	\end{equation*}
It follows from (1) that $ue \sim_\mathfrak{l} v$.

(3) Assume $u \sim_\mathfrak{r} e$. By (2), $\delta(u) = k$, which means $u_i$ is a singleton in $\bar{X}_i$ for $k < i \leq n$. Since $ev = u$ for some $v \in U$,
	\begin{equation*}
		e_i{^{(e_{i + 1}, \cdots, e_n)}}{v_i} = u_i
	\end{equation*}
for $1 \leq i \leq k$, which shows that $u_i$ does not depend on $X_{k + 1} \times \cdots \times X_n$. Similarly, $uw = e$ for some $w \in U$, and hence
	\begin{equation*}
		u_i{^{(u_{i + 1}, \cdots, u_n)}}{w_i} = e_i.
	\end{equation*}
Whenever $1 \leq i \leq k$, $\mathrm{Im}(u_i) \subset G_i$ since $e_i = 1_{G_i}$. Therefore we can conclude that $R_e \subset H_i \times \bar{Y}_i$. The opposite inclusion is obvious.

(4) This is an immediate consequence of (1) and (3).
\end{proof}

Proposition \ref{prop:Class} implies that there are exactly $m$ regular $\mathfrak{j}$-classes in $U$ whose maximal subgroup is determined by the first $k$ holonomy groups. We can now apply this to Theorem \ref{thm:MunnPonizovskii} to determine all irreducible representations of $U$.

\begin{theorem}
Let $(X, S)$ be a transformation semigroup with height function $\eta: XS \to \mathbb{Z}$, which induces a reduced holonomy decomposition
	\begin{equation*}
		\widetilde{\mathrm{Hol}}_*(X, S) = (Y, U)
	\end{equation*}
such that $\dim(X, S) = (m, n)$. Fix $y \in Y$. If $K$ is a field, then $M_i \in \mathbf{Mod}\text{-}KU$ satisfying
	\begin{equation*}
		M_i \cong M \otimes_{KH_i} K(H_i \times \bar{Y}_i),
	\end{equation*}
where $M \in \mathbf{Mod}\text{-}KH_i$ is simple and $H_e \cong H_i$ for $e \in E(U, y)$ with $\delta(e) = i$ for $1 \leq i \leq m$, is principal indecomposable. Furthermore, $M_i$ contains a unique maximal submodule
	\begin{equation*}
		N_i = \left\{ m \in M_i \mid mKUe = 0 \right\},
	\end{equation*}
so that $M_i/N_i \in \mathbf{Mod}\text{-}KU$ is simple.
\end{theorem}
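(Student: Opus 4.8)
The plan is to deduce everything from the Munn--Ponizovski\u{\i} theorem (Theorem \ref{thm:MunnPonizovskii}) once the tensor product in the statement is recognized as an induced module. First I would fix, for each $1 \le i \le m$, the idempotent $e \in E(U, y)$ with $\delta(e) = i$; by Proposition \ref{prop:RegularityCondition} and Proposition \ref{prop:Class}(2) the classes $J_e$ obtained this way are exactly the regular $\mathfrak{j}$-classes of $U$, so $e$ may be taken in $E(U)$ and the hypotheses of Theorem \ref{thm:MunnPonizovskii} are met. Proposition \ref{prop:Class}(4) gives $H_e \cong H_i$ and Proposition \ref{prop:Class}(3) gives $R_e \cong H_i \times \bar{Y}_i$. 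Substituting these into the induction formula $\mathrm{Ind}_{H_e}^U(N) \cong N \otimes_{KH_e} KR_e$ recorded just after Theorem \ref{thm:MunnPonizovskii} yields
\[
\mathrm{Ind}_{H_e}^U(M) \cong M \otimes_{KH_i} K(H_i \times \bar{Y}_i) = M_i,
\]
so $M_i$ is precisely the module induced from the simple $KH_e$-module $M$.

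With this identification the ``furthermore'' clause is immediate. Applying Theorem \ref{thm:MunnPonizovskii}(2) to the simple module $M \in \mathbf{Mod}\text{-}KH_e$ shows that $\mathrm{Ind}_{H_e}^U(M) = M_i$ carries a unique maximal submodule, namely $\{m \in M_i \mid mKUe = 0\} = N_i$, and that $M_i/N_i$ is the unique simple $KU$-module with apex $J_e$ whose restriction to $KH_e$ is $M$. In particular $M_i/N_i$ is simple, as claimed.

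It remains to show that $M_i$ is principal indecomposable, and this is where I expect the real work to lie. Indecomposability is cheap: since $KU$ is finite-dimensional and hence artinian, a module with a unique maximal submodule has simple top and is therefore indecomposable, and the previous step already exhibited that submodule $N_i$. The substantive point is \emph{projectivity}. Here I would argue that induction carries projectives to projectives, because $\mathrm{Ind}_{H_e}^U$ sends the rank-one free module $KH_e$ to $KR_e \cong e(KU/KI)$, a direct summand of the regular representation and hence projective, so that additivity and right exactness of $-\otimes_{KH_e} KR_e$ propagate projectivity to every induced module. One then invokes that the simple module $M$ is itself projective over $KH_i$: this is exactly where the field enters, for when $\mathrm{char}(K)$ does not divide $|H_i|$---in particular for $K = \mathbb{C}$, the case relevant to $\mathbb{P}U \hookrightarrow \mathbb{C}U$---Maschke's theorem makes $KH_i$ semisimple, so every simple module is projective. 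Granting this, $M_i$ is an indecomposable projective, i.e.\ principal indecomposable, and $M_i \to M_i/N_i$ is its projective cover.

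The main obstacle I anticipate is controlling projectivity across the reduction to $KU/KI$ that underlies the induction formula: the isomorphism $e(KU/KI) \cong KR_e$ delivers projectivity over the quotient algebra, and one must verify that this descends to genuine projectivity of $M_i$ as a $KU$-module (and, in passing, that the idempotents of $E(U, y)$ of depths $1, \dots, m$ really exhaust $E(U)$, so no regular $\mathfrak{j}$-class is overlooked). Once the bijection of Theorem \ref{thm:MunnPonizovskii} is combined with the correspondence between simple and principal indecomposable modules from the start of the section, letting $M$ range over the simple $KH_i$-modules and $i$ over $1 \le i \le m$ produces every principal indecomposable $KU$-module, which is the global statement the theorem is reaching for.
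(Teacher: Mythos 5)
Your reduction of everything except projectivity to Theorem \ref{thm:MunnPonizovskii} is sound and matches the paper: the identification $M_i \cong \mathrm{Ind}_{H_e}^U(M)$ via Proposition \ref{prop:Class}(3),(4) and the induction formula, the unique maximal submodule $N_i$, and the simplicity of $M_i/N_i$ come out exactly as in the paper's proof, and your route to indecomposability (a unique maximal submodule forces a simple top) is a harmless substitute for the paper's explicit basis computation. The genuine gap is the step you yourself flag and then concede with ``granting this'': projectivity of $M_i$ over $KU$. Your argument needs two inputs, and you establish neither. The first, projectivity of $M$ over $KH_i$, forces the hypothesis $\mathrm{char}(K) \nmid |H_i|$, which is not in the statement. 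The second, that induction carries projectives to projectives, is exactly the assertion at issue: $KR_e \cong e(KU/KI)$ is a direct summand of $KU/KI$, \emph{not} of the regular module $KU$, and inflation along $KU \to KU/KI$ does not preserve projectivity. Indeed, once $KH_e$ is semisimple, $KR_e = KH_e \otimes_{KH_e} KR_e$ is a finite direct sum of the modules $M \otimes_{KH_e} KR_e$ with $M$ simple, so ``every $M_i$ is projective'' is \emph{equivalent} to ``$KR_e$ is projective as a right $KU$-module''; what would actually close the gap is a structural fact about $U$, e.g.\ that the right ideal $KI$ is generated by an idempotent so that $0 \to KI \to KU \to KU/KI \to 0$ splits, and nothing in your sketch provides it.

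You should know, however, that your instinct about where the field enters is better than the paper's own treatment, which disposes of this point with the sentence ``Since $M_i$ is free, it is projective'': the exhibited basis only shows that $M_i$ is free as a $K$-vector space (equivalently, that $KR_e$ is a free left $KH_i$-module), which says nothing about projectivity over $KU$. In fact the characteristic restriction you introduce is necessary, because the theorem as stated is false. Take $X = \mathbf{2}$ and $S = F_2$: then $n = m = 1$, $Y = X_1 = \{y_0, y_1\}$, $H_1 = G_1 \cong \mathbb{Z}/2 = \{1, \tau\}$, and $U = \{1, \tau\} \cup \bar{Y}$. If $\mathrm{char}(K) = 2$, the only simple $KH_1$-module is the trivial one, so $M_1 = M \otimes_{KH_1} KR_e$ with $R_e = \{1, \tau\}$ is the one-dimensional module on which $\tau$ acts as $1$ and both constants act as $0$; but $KU = \bar{y}_0 KU \oplus (1 - \bar{y}_0)KU$ is a decomposition into indecomposable right ideals of dimension two (each contains a unique one-dimensional submodule), so by Krull--Schmidt every principal indecomposable $KU$-module is two-dimensional and $M_1$ is not one of them. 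So a correct proof must both assume $KH_i$ semisimple and prove that $KR_e$ is a projective right $KU$-module; neither your proposal nor the paper does the latter. (A smaller slip: by Proposition \ref{prop:RegularityCondition} the constant maps form a regular $\mathfrak{j}$-class of depth $0$, which no $e \in E(U, y)$ with $\delta(e) = i \geq 1$ reaches, so your closing claim that the $M_i$ exhaust all principal indecomposable $KU$-modules overlooks one $\mathfrak{j}$-class; the paper's remark following the theorem has the same defect.)
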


\begin{proof}
It is easy to see that elements $m \otimes (1_{H_i}, \bar{z})$, where $m$ is a basis of $M$ and $z \in Y_i$, form a basis of $M_i$. For any $(h, \bar{z}) \in H_i \times \bar{Y}_i$, we can write
	\begin{equation*}
		m \otimes (h, \bar{z}) = m(h, \bar{y}_i) \otimes (1, \bar{z}).
	\end{equation*}
This implies that $M_i$ is indecomposable. Since $M_i$ is free, it is projective, and hence principal indecomposable. The result follows from Theorem \ref{thm:MunnPonizovskii}.
\end{proof}

It follows from Theorem \ref{thm:MunnPonizovskii} that modules of the form $M_i/N_i$ induced by a simple right $KH_i$-module $M$, where $H_e \cong H_i$ for some $e \in E(U, y)$, account for all simple right $KU$-modules.

\addcontentsline{toc}{section}{References}

\end{document}